\newtheorem{thm}{Theorem}[section]
\newtheorem{lem}[thm]{Lemma}
\newtheorem{pro}[thm]{Proposition}
\newtheorem{cor}[thm]{Corollary}
\theoremstyle{definition}
\numberwithin{equation}{section}
\theoremstyle{remark}
\newtheorem{remark}{Remark}[section]
\theoremstyle{definition}
\newcommand{\C}{\mathbb{C}}
\newcommand{\I}{\mathbb{I}}
\newcommand{\N}{\mathbb{N}}
\newcommand{\Rd}{\mathbb{R}^{3}}
\newcommand{\V}{\mathbb{V}}
\numberwithin{equation}{section}
\begin{document}

\title[Self-adjoint extensions of Dirac operators]{Self-adjoint extensions of Dirac operators with Coulomb type singularity}

\author{Naiara Arrizabalaga, Javier Duoandikoetxea and Luis Vega}

\address{N. Arrizabalaga, Javier Duoandikoetxea and Luis Vega: Universidad del Pa\'is Vasco (UPV/EHU), Departamento de Matem\'aticas, Apartado 644, 48080, Spain}
\email{naiara.arrizabalaga@ehu.es, javier.duoandikoetxea@ehu.es, luis.vega@ehu.es}
\thanks{The authors are supported in part by the grant MTM2011-24054 of the Ministerio de Econom\'{\i}a y Competitividad (Spain).}

\begin{abstract} In this work we construct self-adjoint extensions of the Dirac operator associated to Hermitian matrix potentials with Coulomb decay  and prove that the domain is maximal.  The result is obtained by means of a Hardy-Dirac type inequality. In particular, we can work with some electromagnetic potentials such that both, the electric potential and the magnetic one, have Coulomb type singularity. 
\end{abstract}

\subjclass[2000]{81Q10, 35P05, 35Q40.}
\keywords{%
Relativistic Quantum Mechanics, Dirac operator, self-adjoint extensions, Hardy-Dirac inequality, Coulomb potential.}

\maketitle

\section{Introduction}\label{Intro}

This work is devoted to the construction of self-adjoint extensions for the Dirac operator with Coulomb type singularities. We will denote these potentials by $\V$, so the Dirac operator associated to $\V$ takes the form
$$H=-i\alpha\cdot\nabla+m\beta -\V,$$
for $m\geq 0$ and  $\alpha=(\alpha_1,\alpha_2,\alpha_3)$, where $\alpha_k,\beta\in\mathcal M_{4\times4}(\C)$, $k=1,2,3$, are the Dirac matrices  \begin{equation*}
\alpha_k=
  \left(
  \begin{array}{cc}
    0 & \sigma_k
    \\
    \sigma_k & 0
  \end{array}\right),
  \qquad
   \beta
  =
  \left(
  \begin{array}{cc}
    \I_2 & 0
    \\
    0 & -\I_2
  \end{array}\right),
\end{equation*}
defined in terms of the {\it Pauli matrices} $\sigma_k\in\mathcal M_{2\times2}(\C)$, given by
\begin{equation*}
\I_2
=
  \left(
  \begin{array}{cc}
    1 & 0
    \\
    0 & 1
  \end{array}\right),
  \quad
  \sigma_1
  =
  \left(
  \begin{array}{cc}
    0 & 1
    \\
    1 & 0
  \end{array}\right),
  \quad
  \sigma_2
  =
  \left(
  \begin{array}{cc}
    0 & -i
    \\
    i & 0
  \end{array}\right),
  \quad
  \sigma_3
  =
  \left(
  \begin{array}{cc}
    1 & 0
    \\
    0 & -1
  \end{array}\right),
\end{equation*}
and $m$ is a non-negative real constant describing the mass of the particle. 

In this paper we deal with Hermitian matrix potentials $\V$ such that 
$$\sup_{x\in\Rd\backslash \{0\}}|x||\V(x)|< 1,$$
where $|\V(x)|=\sup_{\|b\|=1}\langle\V b,\V b\rangle^{1/2}, \, b\in\C^4$. We construct self-adjoint extensions for the Dirac operator $H$ and give the explicit expression of the domain of the operator  $\mathcal{D}(H)$.
 
A particular example of potentials considered here are the electromagnetic potentials 
$$\V=\left( \begin{array}{rr} \frac{\nu}{|x|} &  \sigma\cdot A  \\  \sigma\cdot A  & \frac{\nu}{|x|}  \end{array} \right),$$
such that $\sup_{x\in\Rd\backslash\{0\}}|\nu|+|x||A(x)|<1$ where $\frac{\nu}{|x|}$ is the electrostatic potential and the magnetic one is defined as
$$A=A(x)=(A^1(x),A^2(x),A^3(x)) :\Rd\rightarrow \Rd.$$ 
For this concrete potential, the operator $H$ can be written as  
$$H=-i\alpha\cdot\nabla+m\beta -\V=-i\alpha\cdot\nabla_A+m\beta -\frac{\nu}{|x|}\I_4,$$
where $$\nabla_A=\nabla-iA.$$
Observe that the magnetic potential is introduced in the operator replacing the standard gradient by $\nabla_A$. 

Note that in the particular case in which $A=0$ we construct a self-adjoint extension of the Dirac operator with the Coulomb potential for $\nu<1$. 
One of the first results in this sense is due to Kato, who in his book \cite{Ka} proved that for $V$ a multiplication operator with an Hermitian $4\times 4$ matrix such that each component $V_{ik}$ is a function satisfying the estimate
$$|V_{ik}(x)|\leq a \frac{1}{2|x|}+b\quad \text{for all}\quad x\in\Rd\backslash \{0\}, \; i,k=1, 2, 3, 4,$$
for some constants $b>0$ and $a<1$, then $H=H_0+V$ is essentially self-adjoint  on $\C_c^{\infty}(\Rd\backslash \{0\},\C^4)$ and self-adjoint on $\mathcal{D}(H_0)=H^1(\Rd,\C^4)$. The proof of this result can be found in \cite{T} and it is based on the Kato-Rellich theorem. In 1971, Weidmann proved that the Dirac operator with the Coulomb potential $\frac{\nu}{|x|}$ defined on $\C_c^{\infty}(\Rd\backslash \{0\},\C^4)$ is essentially self-adjoint if and only if $\nu\leq \frac{\sqrt{3}}{2}$, see \cite{W}. One year later, Schmincke in \cite{S1} proved that 
$$|V(x)|\leq \frac{\nu}{|x|}, \quad \nu\in \left(0,\frac{\sqrt{3}}{2}\right),$$
implies that the Dirac operator $H_0+V$ is essentially self-adjoint.
Moreover, different authors like Schmincke, Weidmann, W\"ust, Nenciu and Klaus  among others construct distinguished self-adjoint extensions of this operator  for the Coulomb potential $\frac{\nu}{|x|}$ where $\nu<1$ by using different methods. Esteban and Loss define an extension for $\nu\leq 1$ via Hardy-Dirac inequalities. See for instance  \cite{S2}, \cite{S1}, \cite{W1}, \cite{W2}, \cite{W3}, \cite{N}, \cite{KW} and \cite{EL}. This problem is also treated in \cite{NA}, in fact, the Coulomb potential is a particular case of the potentials studied there. Although it is not written in  \cite{NA}, doing some small modifications it is possible to construct a self-adjoint extension for $\nu\leq 1$. 

Among the previously mentioned works we are mainly interested in two of them. In the early seventies, W\"ust in \cite{W1}, \cite{W2}, \cite{W3} and Nenciu in \cite{N} constructed distinguished self-adjoint extensions of the Dirac-Coulomb operator for $\nu<1$. The extension of W\"ust is characterized by the fact that the domain of the extension is included in $\mathcal{D}(r^{-1/2})$ which is defined as the space of functions $\psi$ such that 
$$\int_{\Rd}|\psi|^2\frac{dx}{|x|}<+\infty.$$
Meanwhile, Nenciu characterizes his extension by the fact that the domain is included in the space $H^{1/2}(\Rd,\C^4)$. A couple of years later Klaus and W\"ust showed in \cite{KW} that the extensions considered by W\"ust and Nenciu are the same. 

Self-adjoint extensions of the Dirac operator with magnetic potentials that have constant magnetic field are studied in  \cite{DEL}  and \cite{T}. More general matrix-valued potentials are considered in \cite{N} and in a series of papers by Arai and Yamada \cite{A1}, \cite{A2}, \cite{A3}. They prove essential self-adjointness, existence of distinguished self-adjoint extensions and invariance of the essential spectrum for a class of matrix-valued potentials. More concretely, in \cite{A2} the author defines a self-adjoint extension of the Dirac operator for a matrix-valued potential with a Coulomb type singularity, and moreover, the domain is contained in $H^{1/2}(\Rd,\C^4)$ and in $\mathcal{D}(r^{-1/2})$.

Next we state the two main results of this paper. 
\begin{thm}\label{teorema}
Let $\V$ an Hermitian matrix potential such that 
\begin{equation}\label{baldintza}
\sup_{x\in\Rd\backslash \{0\}}|x||\V(x)|< 1
\end{equation}
and  let $f\in L^2(\Rd, \C^4)$.  Then there exists a unique $\psi\in L^2(\Rd, \C^4)$ that satisfies 
\begin{equation}\label{theorem}
\int_{\Rd}\psi\cdot \overline{(H+ i) \varphi}=\int_{\Rd} f \cdot \overline{\varphi} 
\end{equation}
for all $\varphi\in L^2(\Rd,\C^4)$ such that $(H+i)\varphi \in L^2(\Rd,\C^4)$.
Analogously, there exists a unique $\psi\in L^2(\Rd, \C^4)$ that satisfies 
\begin{equation}\label{theorem2}
\int_{\Rd}\psi\cdot \overline{(H-i) \varphi}=\int_{\Rd} f \cdot \overline{\varphi}
\end{equation}
for all $\varphi\in L^2(\Rd,\C^4)$ such that $(H+i)\varphi \in L^2(\Rd,\C^4)$. Moreover, for a constant $c$ both $\psi$'s satisfy:
\begin{itemize}
\item[(i)] $\|\psi\|_{L^2}\leq  \|f\|_{L^2}$.
\item[(ii)] $\displaystyle{\int_{\Rd}\frac{|\psi|^2}{|x|}\leq c \int_{\Rd}|f|^2}$.
\item[(iii)] $\displaystyle{\int_{|x|\leq1}|x||\alpha\cdot\nabla\psi|^2\leq c \int_{\Rd}|f|^2}$.
\item[(iv)] $\|\psi\|_{H^{1/2}}\leq c \|f\|_{L^2}$.
\item[(v)] Let $f_1, f_2\in L^2(\Rd, \C^4)$ and the corresponding $\psi_1,\psi_2\in H^{1/2}(\Rd, \C^4)$. Then
$$\int_{\Rd}(H\pm i)\psi_1\cdot\overline{\psi_2}=\int_{\Rd}\psi_1\cdot\overline{(H\mp i)\psi_2}.$$
\end{itemize}
\end{thm}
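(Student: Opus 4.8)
The plan is to adapt the variational construction of Esteban and Loss to the present matrix (and magnetic) setting. The whole argument hinges on a \emph{Hardy--Dirac inequality}, which I would establish first: in essence, that for $\phi\in\Cc$ a suitably weighted kinetic energy (with a weight behaving like $|x|$ near the origin) plus a constant multiple of $\|\phi\|_{L^2}^2$ controls $\int_{\Rd}|\phi|^2/|x|\,dx$, the available gain degenerating exactly as $\sup_x|x||\V(x)|\uparrow1$ in \eqref{baldintza}. Granting this, write $\psi=(\psi_1,\psi_2)$ and $f=(f_1,f_2)$ for the pairs of upper/lower two--spinors ($\psi_j,f_j\in L^2(\Rd,\C^2)$), split $\V$ into $2\times2$ blocks $\V_a,\V_b,\V_b^{*},\V_d$ with $\V_a,\V_d$ Hermitian, and write $\langle g,h\rangle=\int_{\Rd}g\cdot\bar h$. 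The first step is to eliminate the lower component: since $-m\I_2-\V_d$ is Hermitian, $R:=((-m+i)\I_2-\V_d)^{-1}$ exists pointwise with $\|R\|_{L^\infty}\le1$, so the lower equation of $(H+i)\psi=f$ gives $\psi_2=R(f_2+L\psi_1)$, where $L:=i\,\sigma\cdot\nabla+\V_b$ (so $L=i\,\sigma\cdot\nabla_A$ in the electromagnetic example of the introduction). Inserting this into the upper equation yields a closed equation for $\psi_1$ whose weak form reads $B(\psi_1,\phi)=\langle f_1,\phi\rangle+\langle Rf_2,L\phi\rangle$ for all test $\phi$, with
$$B(\psi_1,\phi)=\langle((m+i)\I_2-\V_a)\psi_1,\phi\rangle-\langle RL\psi_1,L\phi\rangle .$$

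The second step is to solve this weak equation by the Lax--Milgram theorem. Let $\mathcal H$ be the completion of $\Cc$ in the norm $\|\phi\|_{\mathcal H}^2=\|\phi\|_{L^2}^2+\int_{\Rd}|\phi|^2/|x|\,dx+\int_{\Rd}\frac{|x|}{1+|x|}\,|L\phi|^2\,dx$. Since $R$ is bounded and, near the origin, $|R(x)|\approx|x|$ (because $\|\V_d(x)\|\approx|x|^{-1}$ there), one checks $|\langle RL\psi_1,L\phi\rangle|\le C\|\psi_1\|_{\mathcal H}\|\phi\|_{\mathcal H}$, so $B$ is bounded on $\mathcal H$, and the right--hand side is bounded by $C\|f\|_{L^2}\|\phi\|_{\mathcal H}$. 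Coercivity is the crux. A short computation gives $\mathrm{Im}\,B(\phi,\phi)=\|\phi\|_{L^2}^2+\|RL\phi\|_{L^2}^2$ and $\mathrm{Re}\,B(\phi,\phi)=m\bigl(\|\phi\|_{L^2}^2+\|RL\phi\|_{L^2}^2\bigr)-\langle\V_a\phi,\phi\rangle+\langle\V_d RL\phi,RL\phi\rangle$; each term here is dominated by $\|\phi\|_{\mathcal H}^2$ or has a favourable sign, except the singular contribution $-\langle\V_a\phi,\phi\rangle$, of size $\int|\phi|^2/|x|$. Absorbing it is precisely what the Hardy--Dirac inequality does: it yields $\int|\phi|^2/|x|+\int\frac{|x|}{1+|x|}|L\phi|^2\le C\bigl(\mathrm{Re}\,B(\phi,\phi)+\|\phi\|_{L^2}^2\bigr)$, and combined with $\mathrm{Im}\,B(\phi,\phi)\ge\|\phi\|_{L^2}^2$ this gives $\mathrm{Re}\bigl(e^{i\theta}B(\phi,\phi)\bigr)\ge c\|\phi\|_{\mathcal H}^2$ for a fixed $\theta$. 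Thus $B$ is bounded and coercive, and Lax--Milgram produces a unique $\psi_1\in\mathcal H$ solving the weak equation, with $\|\psi_1\|_{\mathcal H}\le C\|f\|_{L^2}$.

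The third step is reconstruction and verification. Set $\psi_2=R(f_2+L\psi_1)$ and $\psi=(\psi_1,\psi_2)$; the $\mathcal H$-bound on $\psi_1$ forces $\psi_2\in L^2$ with $\|\psi_2\|_{L^2}\le C\|f\|_{L^2}$, and undoing the reduction shows $(H+i)\psi=f$ in the sense of distributions; by the formal self--adjointness of $H$, this is equivalent (for $\psi$ with the regularity just obtained) to \eqref{theorem2}, and running the same construction with $-i$ in place of $+i$ (i.e.\ with $R=((-m-i)\I_2-\V_d)^{-1}$) gives \eqref{theorem}. Estimates (ii) and (iii) are read off from $\|\psi_1\|_{\mathcal H}\le C\|f\|_{L^2}$ together with the equations, which express $\sigma\cdot\nabla\psi_1$ and $\sigma\cdot\nabla\psi_2$ through lower order quantities (supplemented, away from the origin, by elliptic regularity for the Dirac operator), and (iv) follows from these weighted bounds and the $L^2$ bound by a standard weighted Hardy/interpolation argument. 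For (v) and (i), one uses that $\psi$ lies in $H^{1/2}$, hence is a limit in $H^{1/2}$ of $\Cc$ functions, and that $H$ maps $H^{1/2}$ boundedly into $H^{-1/2}$ (using the fractional Hardy inequality for the Coulomb part of $\V$): passing the symmetry of $H$ on $\Cc$ to the limit gives (v), and taking the imaginary part of $\langle f,\psi\rangle=\langle(H+i)\psi,\psi\rangle=\langle H\psi,\psi\rangle+i\|\psi\|_{L^2}^2$, in which $\langle H\psi,\psi\rangle$ is thereby real, gives $\|\psi\|_{L^2}^2=\mathrm{Im}\langle f,\psi\rangle\le\|f\|_{L^2}\|\psi\|_{L^2}$, i.e.\ (i). Uniqueness follows because an $L^2$ weak solution of the homogeneous problem has, by an a priori regularity argument, its upper component in $\mathcal H$, whence $B(\cdot,\cdot)\equiv0$ forces it (and then its lower component) to vanish.

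The hard part, as I see it, is the Hardy--Dirac inequality together with the way it produces coercivity of $B$: proving it for genuinely matrix--valued potentials subject only to \eqref{baldintza} — in particular covering a magnetic potential with a Coulomb singularity, where the natural weight $R$ is matrix--valued and degenerates at the origin — and extracting from it a \emph{uniform} coercive estimate for the non--symmetric form $B$ on $\mathcal H$. A secondary, more technical difficulty is the regularity bookkeeping needed to place weak solutions inside the form domain $\mathcal H$ (for uniqueness) and to justify the integrations by parts behind \eqref{theorem}, \eqref{theorem2} and~(v).
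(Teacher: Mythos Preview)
Your Esteban--Loss reduction is a genuinely different route from the paper's, but as written it has a real gap. You assert that near the origin $|R(x)|\approx|x|$ ``because $\|\V_d(x)\|\approx|x|^{-1}$ there,'' yet hypothesis \eqref{baldintza} gives only an \emph{upper} bound $|\V_d(x)|\le|\V(x)|<|x|^{-1}$, never a lower one. A purely off-diagonal $\V$ (a magnetic potential with no electric part, say) has $\V_d=0$, so $R=((-m+i)\I_2)^{-1}$ is a \emph{constant} matrix; then $\langle RL\psi_1,L\phi\rangle$ is a fixed multiple of $\langle L\psi_1,L\phi\rangle$, which is not bounded on your $\mathcal H$ because the $\mathcal H$-norm controls only $\int\frac{|x|}{1+|x|}|L\phi|^2$, not $\int|L\phi|^2$. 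Both boundedness and the claimed coercivity of $B$ on this fixed $\mathcal H$ fail. The natural repair---taking the weight in $\mathcal H$ to be $|R|$ itself, as Esteban--Loss effectively do---makes the form domain depend on $\V_d$, and then the Hardy--Dirac inequality you need is one with a matrix-valued, potential-dependent weight; even granting that, recovering the \emph{fixed}-weight conclusions (ii)--(iii) from a variable-weight coercivity bound is an additional step you do not address. The same lack of structure on the blocks also undercuts the later claim that $\sigma\cdot\nabla\psi_j$ can be read off ``through lower order quantities'': when $\V_b$ itself carries the Coulomb singularity, $L\psi_1$ and $\sigma\cdot\nabla\psi_1$ differ by a singular term.

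The paper avoids the two-component reduction entirely. Its engine is the sharp \emph{four-spinor} inequality
\[
\int_{\Rd}\frac{|\psi|^2}{|x|}\ \le\ \int_{\Rd}|x|\,\bigl|(i\alpha\cdot\nabla-m\beta\pm i)\psi\bigr|^2,
\]
proved with constant exactly $1$. Rewriting $(H+i)\psi=f$ as $(\I+K\,|x|\V)w=KF$ with $w=|x|^{-1/2}\psi$, $F=|x|^{1/2}f$ and $K=|x|^{-1/2}(i\alpha\cdot\nabla-m\beta-i)^{-1}|x|^{-1/2}$, this inequality says precisely that $\|K\|_{L^2\to L^2}\le1$, so \eqref{baldintza} gives $\|K\,|x|\V\|<1$ and a Neumann series produces the solution---first for $f$ in the dense subspace $L^2(1+|x|)$, where membership of $\psi$ in the right weighted spaces (and hence (ii)--(iii)) follows by iterating the same inequality term by term, and then in $L^2$ by approximation. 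No block decomposition of $\V$ is ever used; only the scalar bound $|x|\,|\V(x)|<1$ enters, which is exactly what lets the argument cover all Hermitian $\V$ satisfying \eqref{baldintza}, including those whose diagonal block is bounded or vanishes.
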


Note that in the  previous theorem we  have written  $(H \pm i )\psi$ instead of $(H \pm i \I_4)\psi$ to shorten notation.

\begin{thm}\label{3thm}
Let $\V$ an Hermitian matrix potential that satisfies (\ref{baldintza}). Then the Dirac operator $H=-i\alpha\cdot\nabla+m\beta-\V$ with domain $\mathcal{D}(H)=\{\psi\in L^2(\Rd,\C^4): H\psi\in L^2(\Rd,\C^4)\}$ is self-adjoint.
Moreover,
\begin{equation}\label{3cond}
\mathcal{D}(H)\subset H^{1/2}(\Rd,\C^4)\cap \mathcal{D}(r^{-1/2}).
\end{equation}
\end{thm}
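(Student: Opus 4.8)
The plan is to deduce Theorem~\ref{3thm} from Theorem~\ref{teorema} by abstract functional analysis: one exhibits the operator in \eqref{3cond} as the adjoint $\dot H^{*}$ of a suitable symmetric operator $\dot H$, and reads Theorem~\ref{teorema} as saying that $\dot H\pm i$ are surjective, which is the classical self-adjointness criterion.

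\emph{Functional-analytic setup.} Condition \eqref{baldintza} gives $|\V(x)|\le C|x|^{-1}$, and $|x|^{-1}\in L^2_{\mathrm{loc}}(\Rd)$ since $d=3$; hence $\V\psi\in L^1_{\mathrm{loc}}(\Rd,\C^4)$ for every $\psi\in L^2(\Rd,\C^4)$, so $H\psi=-i\alpha\cdot\nabla\psi+m\beta\psi-\V\psi$ is a well-defined distribution on all of $\Rd$ and $\mathcal D(H)$ is the maximal domain. Let $\dot H$ be the restriction of $H$ to $\Ccc$ (legitimate, since $\V\varphi\in L^2$ for $\varphi\in\Ccc$). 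As $\alpha_k,\beta,\V$ are Hermitian and compactly supported test functions produce no boundary term, $\dot H$ is densely defined and symmetric, hence closable; set $H_{\min}:=\overline{\dot H}$. The standard identification of the adjoint of a minimal operator with the maximal one — here it matters that the core is $\Ccc$ and not $\mathcal C_c^{\infty}(\Rd\setminus\{0\},\C^4)$ — yields $\dot H^{*}=H$ with domain $\mathcal D(H)$; thus $H$ on $\mathcal D(H)$ is closed, $H^{*}=\dot H^{**}=H_{\min}$, and $H_{\min}^{*}=H$.

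\emph{Self-adjointness.} The $\varphi$'s admitted in \eqref{theorem}--\eqref{theorem2} are exactly those of $\mathcal D(H)$, on which $(H\pm i)\varphi$ is the action of $H\pm i\I_4$. Since $(H+i)^{*}=H^{*}-i=H_{\min}-i$, unravelling the definition of the adjoint shows that the $\psi$ furnished by \eqref{theorem} lies in $\mathcal D(H_{\min})$ and solves $(H_{\min}-i)\psi=f$; likewise \eqref{theorem2} gives, for arbitrary $f\in L^2$, some $\psi\in\mathcal D(H_{\min})$ with $(H_{\min}+i)\psi=f$. Hence $\mathrm{Ran}(H_{\min}-i)=\mathrm{Ran}(H_{\min}+i)=L^2(\Rd,\C^4)$. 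A densely defined symmetric operator whose ranges under $\pm i$ exhaust the space is self-adjoint, so $H_{\min}$ is self-adjoint and therefore $H=H_{\min}^{*}=H_{\min}$ is self-adjoint with domain $\mathcal D(H)$.

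\emph{The inclusion \eqref{3cond}, and the main obstacle.} Given $\psi\in\mathcal D(H)=\mathcal D(H_{\min})$, set $f:=(H+i)\psi\in L^2$; symmetry of $H_{\min}$ gives $\|(H_{\min}+i)u\|^{2}=\|H_{\min}u\|^{2}+\|u\|^{2}$, so $H_{\min}+i$ is injective and $\psi$ must coincide with the unique solution of \eqref{theorem2} for this $f$. Estimates (ii) and (iv) of Theorem~\ref{teorema} then yield $\int_{\Rd}|\psi|^{2}/|x|\le c\|f\|_{L^2}^{2}$ and $\|\psi\|_{H^{1/2}}\le c\|f\|_{L^2}$, i.e. $\mathcal D(H)\subset H^{1/2}(\Rd,\C^4)\cap\mathcal D(r^{-1/2})$. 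All the analytic difficulty is concentrated in Theorem~\ref{teorema}, which we assume; within the present deduction the only delicate point is the identification $\dot H^{*}=H$ on the maximal domain — equivalently, that no distribution supported at $0$ is lost when passing from $\Rd\setminus\{0\}$ to $\Rd$ — which is exactly why local integrability of $\V\psi$ on all of $\Rd$ is recorded at the outset.
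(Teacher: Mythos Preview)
Your argument is correct, and the route is genuinely different from the paper's. The paper works directly with $H$ on the maximal domain $\mathcal{D}(H)$: it invokes part (v) of Theorem~\ref{teorema} to establish that $H$ is \emph{symmetric} on $\mathcal{D}(H)$, and then uses the fact (implicit in Step~4 of the proof of Theorem~\ref{teorema}) that the weak solution actually lies in $\mathcal{D}(H)$ with $(H\pm i)\psi=f$ in the strong sense, so $\mathrm{Ran}(H\pm i)=L^2$ and the basic criterion applies directly to $H$. You instead introduce the minimal operator $H_{\min}=\overline{\dot H}$ on $\Ccc$, identify the maximal operator as $\dot H^{*}=H$, and read the weak formulations \eqref{theorem}--\eqref{theorem2} through the adjoint: $(H+i)^{*}=H_{\min}-i$, so the $\psi$ of \eqref{theorem} lies in $\mathcal{D}(H_{\min})$ with $(H_{\min}-i)\psi=f$, yielding $\mathrm{Ran}(H_{\min}\pm i)=L^2$ and hence self-adjointness of $H_{\min}$, whence $H=H_{\min}$. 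Your approach dispenses with (v) altogether---symmetry is automatic for $H_{\min}$ as the closure of a symmetric operator---and is arguably more faithful to the \emph{statement} of Theorem~\ref{teorema} (only the weak identity is used), at the price of the identification $\dot H^{*}=H$, which in turn rests on $|x|^{-1}\in L^2_{\mathrm{loc}}(\Rd)$ and the choice of $\Ccc$ rather than $\mathcal C_c^\infty(\Rd\setminus\{0\})$ as the initial core (exactly the point you flag). The paper's approach is more hands-on and keeps everything on the maximal domain, but must import the symmetry identity (v), itself proved via the Neumann-series representation in Step~2. The derivation of \eqref{3cond} is the same in both: apply (ii) and (iv) to $f=(H+i)\psi$ once uniqueness pins $\psi$ down.
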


Theorem \ref{3thm} will be a consequence of Theorem \ref{teorema} by using the basic criterion for self-adjointness.  In a first step in the proof of Theorem \ref{teorema} we prove that the operator $H$ defined on the domain
$$\widetilde{\mathcal{D}}=\{\psi\in L^2(1+|x|): H\psi\in L^2(1+|x|)\}$$
is essentially self-adjoint and then we construct the extension by a density argument. 
Observe that  from (\ref{3cond}) our extension coincides with the one of Arai and  when $\V=\frac{\nu}{|x|}, \nu<1$ with the ones of W\"ust and Nenciu because it is characterized by the fact that the domain is contained in both spaces, $\mathcal{D}(r^{-1/2})$ and $H^{1/2}(\Rd,\C^4)$. One bonus of our approach is that we prove that the extension has maximal domain. The key ingredient for doing this is the inequality proved in the next theorem that we consider interesting by itself. 

\begin{thm}\label{final_ineq}
Let $0\leq m <+\infty$ then
 \begin{equation}\label{final}
 \int_{\Rd}\frac{1}{|x|}|\psi|^2 \leq \int_{\Rd}|x||(i \alpha\cdot\nabla-m\beta\pm \epsilon i)\psi|^2, \quad \epsilon>0.
 \end{equation}
The inequality is sharp in the sense that the constant on the right hand side can not be improved.
\end{thm}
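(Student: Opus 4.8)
The plan is to reduce \eqref{final} to a one‑parameter family of one‑dimensional inequalities via the partial‑wave decomposition, and to settle each of the latter by integrations by parts together with an elementary Hardy/arithmetic--geometric‑mean estimate. By density it suffices to prove \eqref{final} for $\psi\in\Ccc$ (if the right‑hand side is infinite there is nothing to prove). I would then use the standard decomposition of $L^2(\Rd,\C^4)$ into the orthogonal sum of the partial‑wave subspaces labelled by $\kappa\in\Z\setminus\{0\}$ (spinor spherical harmonics): $\psi$ is encoded by a family of reduced radial functions $u^\kappa=(u_1^\kappa,u_2^\kappa)\colon(0,\infty)\to\C^2$, and, writing $A:=i\alpha\cdot\nabla-m\beta\pm\epsilon i$ and letting $h_\kappa$ be the reduced free radial Dirac operator
\[
h_\kappa=\begin{pmatrix} m & -\tfrac{d}{dr}+\tfrac{\kappa}{r}\\[4pt] \tfrac{d}{dr}+\tfrac{\kappa}{r} & -m\end{pmatrix},
\]
one has
\[
\int_{\Rd}\frac{|\psi|^2}{|x|}=\sum_{\kappa}\int_0^\infty\frac{|u_1^\kappa|^2+|u_2^\kappa|^2}{r}\,dr,\qquad
\int_{\Rd}|x|\,|A\psi|^2=\sum_{\kappa}\int_0^\infty r\,\bigl|(-h_\kappa\pm\epsilon i)u^\kappa\bigr|^2\,dr.
\]
Two facts are decisive: $|\kappa|\ge1$ for every partial wave, and, $\psi$ being smooth at the origin, $u_1^\kappa=O(r)$ and $u_2^\kappa=O(r)$ as $r\to0^+$ while $u_1^\kappa,u_2^\kappa$ vanish for large $r$, so that all boundary terms appearing below vanish.

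The heart of the matter is the one‑dimensional inequality
\[
\int_0^\infty\frac{|u_1|^2+|u_2|^2}{r}\,dr\le\int_0^\infty r\,\bigl|(-h_\kappa\pm\epsilon i)u\bigr|^2\,dr,\qquad |\kappa|\ge1,
\]
for $u=(u_1,u_2)$ with the behaviour above. I would expand the squared norm on the right; a few integrations by parts (with vanishing boundary terms), together with the cancellation $(-m\pm\epsilon i)+(m\mp\epsilon i)=0$, turn it into the identity
\begin{align*}
\int_0^\infty r\,\bigl|(-h_\kappa\pm\epsilon i)u\bigr|^2\,dr
&=\int_0^\infty r\bigl(|u_1'|^2+|u_2'|^2\bigr)\,dr+\kappa^2\int_0^\infty\frac{|u_1|^2+|u_2|^2}{r}\,dr\\
&\quad+(m^2+\epsilon^2)\int_0^\infty r\bigl(|u_1|^2+|u_2|^2\bigr)\,dr-2\,\mathrm{Re}\Bigl((-m\pm\epsilon i)\int_0^\infty u_1\overline{u_2}\,dr\Bigr).
\end{align*}
Now $\kappa^2\ge1$ absorbs the target term into the second one; the last term is bounded in modulus, by Cauchy--Schwarz and arithmetic--geometric mean, by $\sqrt{m^2+\epsilon^2}\int_0^\infty(|u_1|^2+|u_2|^2)$; and, setting $\mu:=\sqrt{m^2+\epsilon^2}>0$, what remains is $\int_0^\infty r\bigl(|u_1'|^2+|u_2'|^2+\mu^2(|u_1|^2+|u_2|^2)\bigr)\,dr\ge\mu\int_0^\infty(|u_1|^2+|u_2|^2)\,dr$. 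This follows term by term from $|v'|^2+\mu^2|v|^2\ge2\mu|v|\,|v'|\ge\mu\,\bigl\lvert(|v|^2)'\bigr\rvert$ and $\int_0^\infty r\,\bigl\lvert(|v|^2)'\bigr\rvert\,dr\ge\bigl|\int_0^\infty r\,(|v|^2)'\,dr\bigr|=\int_0^\infty|v|^2\,dr$. Summing over $\kappa$ proves \eqref{final}.

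For the sharpness it is enough to test in one partial wave with $|\kappa|=1$. Taking $u_1=\eta_\delta(r)\,e^{-\mu r}$ and $u_2=-\eta_\delta(r)\,e^{-\mu r}$, with $\mu=\sqrt{m^2+\epsilon^2}$ and $\eta_\delta$ a smooth cut‑off equal to $0$ on $[0,\delta]$ and to $1$ on $[2\delta,\infty)$, one checks in the identity above that every term on the right except $\kappa^2\int\frac{|u_1|^2+|u_2|^2}{r}$ stays bounded as $\delta\to0$ (the cross term equals $-2m\int\eta_\delta^2e^{-2\mu r}\,dr\le0$), whereas $\int\frac{|u_1|^2+|u_2|^2}{r}\,dr$ diverges logarithmically as $\delta\to0$; hence the ratio of the two sides of \eqref{final} tends to $1$ along this family, so the constant multiplying the right‑hand side cannot be lowered.

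The delicate part is not any single estimate but the bookkeeping of the reduction: fixing the exact form of $h_\kappa$, identifying the three integrals above under the partial‑wave decomposition, controlling the behaviour of $u_1^\kappa,u_2^\kappa$ at $r=0$, and checking that the integration‑by‑parts cross terms in the one‑dimensional identity genuinely collapse as stated; once that is done, the surviving estimates are elementary. As an alternative, one may avoid spinor spherical harmonics by writing $i\alpha\cdot\nabla$ in polar coordinates as $i\Omega\bigl(\partial_r+\tfrac1r(\I_4-\mathcal{K})\bigr)$, with $\Omega=\tfrac{\alpha\cdot x}{|x|}$ a self‑adjoint involution and $\mathcal{K}$ a self‑adjoint angular operator on $L^2(S^2,\C^4)$ with $\mathcal{K}^2\ge\I_4$, using that $i\Omega$ is unitary; this yields the same estimates at the cost of heavier operator algebra.
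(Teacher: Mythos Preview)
Your argument is correct. The partial-wave identity you state does hold (the cross terms involving $\kappa$ and the derivative cross terms indeed cancel via the algebraic identity $(-m\pm\epsilon i)+(m\mp\epsilon i)=0$ and one integration by parts), and your endgame estimate $\int_0^\infty r(|v'|^2+\mu^2|v|^2)\,dr\ge\mu\int_0^\infty|v|^2\,dr$ together with $\kappa^2\ge1$ finishes the job. The sharpness test is also fine: the cutoff of $e^{-\mu r}$ in a $|\kappa|=1$ channel is precisely (after undoing the reduction $u=r\phi$) the sequence $\phi_0^{\epsilon,m}=Cr^{-1}e^{-\mu r}$ that the paper singles out.

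The paper's route is closely related but differently packaged. Instead of the full spinor-spherical-harmonic reduction, it stays with the two $2$-spinor components $\psi=(\phi,\chi)$ and uses the polar form $\sigma\cdot\nabla=(\sigma\cdot x/|x|)(\partial_r-\tfrac1r\sigma\cdot L)$ together with the spectral bound $\int_{S^2}|(\sigma\cdot L+1)\phi|^2\ge\int_{S^2}|\phi|^2$; this is exactly your $|\kappa|\ge1$ in disguise, giving $\int r|\partial_r\phi|^2\le\int r|\sigma\cdot\nabla\phi|^2$. Your key $1$D estimate is the paper's Lemma~2.2: after the substitution $v=r\phi$, the inequality $\int|\phi|^2/|x|+\mu\int|\phi|^2\le\int|x||\partial_r\phi|^2+\mu^2\int|x||\phi|^2$ becomes precisely $\mu\int|v|^2\le\int r|v'|^2+\mu^2\int r|v|^2$, which the paper obtains by expanding the square $\bigl||x|^{1/2}\partial_r\phi+|x|^{-1/2}\phi+\mu|x|^{1/2}\phi\bigr|^2\ge0$ rather than via AM--GM. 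So the two proofs share the same two ingredients (angular spectral gap, weighted $1$D Hardy estimate); yours trades the $\sigma\cdot L$ algebra for the partial-wave bookkeeping, which makes the structure of the identity more transparent at the cost of invoking the spinor-harmonic machinery. The alternative you sketch at the end---polar coordinates with $\mathcal K$---is essentially the paper's approach.
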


\begin{remark}\label{ohar2}
As it will be seen in the proof a relevant minimizing sequence of  (\ref{final}) can be obtained by using the spinors $\psi_0^{\epsilon,m}$ where
$$\psi_0^{\epsilon,m}=\left( \begin{array}{rr} \phi_0^{\epsilon,m}  \\ \chi_0^{\epsilon,m}  \end{array} \right), \; \phi_0^{\epsilon,m}=C r^{-1}e^{-\sqrt{\epsilon^2+m^2} r} \ (C\in\C^2) \; \text{and} \; \chi_0^{\epsilon,m}=\frac{\epsilon+im}{\sqrt{\epsilon^2+m^2}}\left(\sigma\cdot\frac{x}{|x|}\right)\phi_0^{\epsilon,m}.$$

Note that $\psi_0^{\epsilon,m}$ are solutions of 
\begin{equation}\label{eigen}
(-i\alpha\cdot\nabla+m\beta-\V- i \epsilon\I_4)\psi_0^{\epsilon,m}=0
\end{equation}
for the Hermitian potentials
$$\V=\frac{1}{|x|}\left( \begin{array}{rr} c\I_2 & \overline{b}\sigma\cdot\frac{x}{|x|}  \\ b\sigma\cdot\frac{x}{|x|} & c\I_2  \end{array} \right)$$
with $c$ real   and 
\[
b= \frac {c(\epsilon+im)}{\sqrt{\epsilon^2+m^2}}-i.
\]
Those potentials satisfy $\sup_{x\in \Rd}|x||\V(x)|= 1$ only in the cases $c=1, b=0$ and $c=0, b=-i$, and in both cases  $|x||\V(x)|\equiv 1$.

The case $c=1, b=0$ corresponds to $\epsilon=0$. Therefore, for $\V=\frac{1}{|x|}\I_4$ the functions $\psi_0^{0,m}$ such that $\phi_0^{0,m}=C\frac{1}{r}e^{-mr}$ and $\chi_0^{0,m}=i\sigma\cdot\frac{x}{|x|} \phi_0$ are eigenfunctions of eigenvalue $0$ of $H$. 

In the case $c=0$, $b=-i$, (\ref{eigen}) holds for $\psi_0^{\epsilon,m}$ with positive $\epsilon$. We conclude that $\text{Ker}(H^*+i\epsilon)\neq\{0\}$. Furthermore, it is easy to see that in this case $H$ is symmetric in $\widetilde{\mathcal{D}}\cap \mathcal{D}(r^{-1/2})$. Therefore, $H$ is not essentially self-adjoint and in this sense condition ($\ref{baldintza}$) is sharp. 

\end{remark}

The paper is structured as follows. In Section \ref{sec:ineq} we prove the inequalities we will need for the proofs of Theorems \ref{final_ineq} and \ref{teorema}.  Sections \ref{sec:pde} and \ref{sec:s_a} are devoted to the proof of Theorems \ref{teorema} and \ref{3thm}, respectively. To finish, in Section \ref{sec:further} we follow the arguments of \cite{DDEV} and give an alternative proof of Theorem \ref{final_ineq}.

\section{Some Hardy-Dirac inequalities}\label{sec:ineq}

We first state and prove several lemmas that we will need for the proof of Theorem \ref{final_ineq}.

\begin{lem}\label{Lemma2}
Let $\phi:\Rd\to \C^2$, $r=|x|$ and $\partial_r=\frac{x}{|x|}\cdot\nabla$, then
\begin{equation}\label{lem2}
\int_{\Rd}r |\partial_r\phi|^2\leq \int_{\Rd}r |\sigma\cdot\nabla\phi|^2.
\end{equation}
Equality holds if and only if $\phi$ is a radial function.
\end{lem}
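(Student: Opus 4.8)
The plan is to reduce the inequality to a one-dimensional identity in the radial variable by exploiting the spherical factorization of $\sigma\cdot\nabla$. Write $\hat{x}=x/|x|$ and let $L=-i\,x\times\nabla$ be the angular momentum, so that $x\times\nabla=iL$. From $(\sigma\cdot a)(\sigma\cdot b)=(a\cdot b)\,\I_2+i\,\sigma\cdot(a\times b)$ one gets the pointwise operator identity $(\sigma\cdot\hat{x})(\sigma\cdot\nabla)=\partial_r+i\,\sigma\cdot(\hat{x}\times\nabla)=\partial_r-\frac{1}{r}\,\sigma\cdot L$. Setting $K:=\sigma\cdot L$ and using that $\sigma\cdot\hat{x}$ is at each point a Hermitian involution (hence unitary), multiplication on the left by $\sigma\cdot\hat{x}$ gives $\sigma\cdot\nabla=(\sigma\cdot\hat{x})\bigl(\partial_r-\frac{1}{r}K\bigr)$, and taking pointwise norms,
$$|\sigma\cdot\nabla\phi|^2=\Bigl|\partial_r\phi-\tfrac{1}{r}K\phi\Bigr|^2 .$$

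Next I would multiply by $r$, integrate over $\Rd$ and expand the square; the only term not already in the desired form is the cross term $-2\int_{\Rd}\operatorname{Re}\langle\partial_r\phi,K\phi\rangle$, which I would handle by an integration by parts in $r$. Since $K=\sigma\cdot L$ carries no radial derivative, commutes with $\partial_r$, and is self-adjoint on $L^2(S^2,\C^2)$ (the $L_j$ generate rotations and the $\sigma_j$ are Hermitian and commute with them), the spherical integral $\int_{S^2}\langle\phi,K\phi\rangle$ is real with $r$-derivative $2\operatorname{Re}\int_{S^2}\langle\partial_r\phi,K\phi\rangle$; writing $dx=r^2\,dr\,d\omega$ and integrating by parts in $r$ (the boundary terms vanishing for $\phi\in\mathcal{C}_c^\infty(\Rd\backslash\{0\},\C^2)$, the general case by density) converts the cross term into $+2\int_{\Rd}r^{-1}\langle\phi,K\phi\rangle$. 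The outcome is the identity
$$\int_{\Rd}r\,|\sigma\cdot\nabla\phi|^2=\int_{\Rd}r\,|\partial_r\phi|^2+\int_{\Rd}\frac{1}{r}\,\langle(K^2+2K)\phi,\phi\rangle=\int_{\Rd}r\,|\partial_r\phi|^2+\int_{\Rd}\frac{1}{r}\Bigl(|(K+1)\phi|^2-|\phi|^2\Bigr),$$
so the lemma reduces to the operator inequality $K^2+2K=(K+1)^2-\I_2\ge0$.

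That reduction is the step I expect to be the main obstacle. It holds because the spin-orbit operator $K=\sigma\cdot L$ has spectrum equal to the set of integers different from $-1$ ($\kappa=\ell$ on the spinor harmonics with $j=\ell+\tfrac12$ and $\kappa=-\ell-1$ on those with $j=\ell-\tfrac12$), so $K+1$ has spectrum $\{\pm1,\pm2,\dots\}$ and $(K+1)^2\ge\I_2$. To keep the argument self-contained I would instead decompose $\phi$ into these angular channels, $\phi=g(r)\,\Omega_\kappa(\omega)$: the two integrals then decouple channel by channel, and, using $\int_0^\infty r^2\operatorname{Re}(g'\overline{g})\,dr=-\int_0^\infty r|g|^2\,dr$, the claim becomes the elementary inequality $(\kappa^2+2\kappa)\int_0^\infty r|g|^2\,dr\ge0$, valid for every integer $\kappa\ne-1$. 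Finally, equality in (\ref{lem2}) forces $(K^2+2K)\phi(r,\cdot)=0$ for almost every $r$, i.e.\ $\phi(r,\cdot)$ in the kernel of $K(K+2)$; since the $\kappa=0$ eigenspace consists exactly of the radial spinors this yields the stated sufficient condition, although the $\kappa=-2$ channel (spinors of the form $f(r)(\sigma\cdot\hat{x})C$, $C\in\C^2$) also turns out to saturate the inequality, so the equality set is in fact slightly larger than the set of radial functions. A secondary, purely technical, point is to make sure the radial integration by parts produces no boundary terms in the full generality in which $\phi$ is taken.
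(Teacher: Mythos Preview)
Your argument is essentially the same as the paper's: the factorization $\sigma\cdot\nabla=(\sigma\cdot\hat x)\bigl(\partial_r-\tfrac1r\,\sigma\cdot L\bigr)$, the radial integration by parts of the cross term, and the resulting identity
\[
\int_{\Rd}r\,|\sigma\cdot\nabla\phi|^2=\int_{\Rd}r\,|\partial_r\phi|^2+\int_{\Rd}\frac{1}{r}\Bigl(|(\sigma\cdot L+1)\phi|^2-|\phi|^2\Bigr)
\]
are exactly what the paper does, and the spectral bound $(\sigma\cdot L+1)^2\ge \I_2$ is the same final ingredient. Your presentation is slightly more explicit about the spectrum of $\sigma\cdot L$, but the route is identical.

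Your remark about the equality case is in fact sharper than the paper's. The paper only verifies that radial $\phi$ give equality and asserts the ``only if'' without proof; your observation that the $\kappa=-2$ channel (spanned by $f(r)(\sigma\cdot\hat x)C$, $C\in\C^2$) also annihilates $K(K+2)$ is correct, since $(\sigma\cdot\hat x)$ anticommutes with $K+1$ and maps the $\kappa=0$ eigenspace to the $\kappa=-2$ eigenspace. So the ``if and only if'' in the lemma as stated is not quite right, and you have identified the full equality set. This does not affect any later use of the lemma in the paper, where only the inequality itself is needed.
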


\begin{proof}
Here we follow the approach of \cite{DEL} or \cite{V}. Recall that the angular momentum vector $L$ is given by
$$L=-i \nabla\wedge x.$$
A simple calculation shows that 
$$\sigma\cdot\nabla\phi=\left(\sigma\cdot\frac{x}{r}\right)\left(\partial_r-\frac{1}{r}\sigma\cdot L\right)$$
and it can be easily seen  that
$$\int_{\Rd}r\left|\left(\sigma\cdot\frac{x}{|x|}\right)\phi\right|^2=\int_{\Rd}r|\phi|^2.$$
Therefore,
\begin{eqnarray*}
\int_{\Rd} r |\sigma\cdot\nabla\phi|^2&=&\int_{\Rd} r\left|\left(\partial_r-\frac{1}{r}\sigma\cdot L\right)\phi\right|^2\\
&=&\int_{\Rd} r|\partial_r\phi|^2+\int_{\Rd} \frac{1}{r}|\sigma\cdot L\phi|^2-\int_{\Rd} r \partial_r\phi\left(\overline{\frac{1}{r}\sigma\cdot L \phi}\right)\\
&-&\int_{\Rd} r \left(\frac{1}{r}\sigma\cdot L \phi\right) \overline{\partial_r\phi}.
\end{eqnarray*}
Since $\sigma\cdot L$ and $\partial_r$ commute and since $\sigma\cdot L$ is symmetric, we have
\begin{eqnarray*}
-\int_{\Rd} \partial_r(\phi (\overline{\sigma\cdot L \phi}))&=& -\int_{\Rd} \partial_r\phi (\overline{\sigma\cdot L \phi})-\int_{\Rd} \phi \, \overline{\partial_r(\sigma\cdot L \phi)}\\
&=&-\int_{\Rd} r \partial_r\phi\left(\overline{\frac{1}{r}\sigma\cdot L \phi}\right)-\int_{\Rd} r \left(\frac{1}{r}\sigma\cdot L \phi\right) \overline{\partial_r\phi}.
\end{eqnarray*}
Moreover, writing in polar coordinates and integrating by parts we get
\begin{eqnarray*}
-\int_{\Rd} \partial_r(\phi (\overline{\sigma\cdot L \phi}))&=&\int_{\Rd} \frac{2}{r}\phi (\overline{\sigma\cdot L \phi}).
\end{eqnarray*}
In consequence,
\begin{eqnarray*}
\int_{\Rd} r |\sigma\cdot\nabla\phi|^2&=&\int_{\Rd} r|\partial_r\phi|^2+\int_{\Rd} \frac{1}{r}|\sigma\cdot L\phi|^2+\int_{\Rd} \frac{2}{r}\phi (\overline{\sigma\cdot L \phi})\\
&+&\int_{\Rd}\frac{1}{r}|\phi|^2-\int_{\Rd}\frac{1}{r}|\phi|^2\\
&=& \int_{\Rd} r|\partial_r\phi|^2+\int_{\Rd} \frac{1}{r}|(\sigma\cdot L+1)\phi|^2-\int_{\Rd}\frac{1}{r}|\phi|^2.
\end{eqnarray*}
The result follows from the fact 
$$\int_{S^2} |(\sigma\cdot L+1)\phi|^2\geq \int_{S^2} |\phi|^2.$$

Moreover, since Ker($\sigma\cdot L$) contains all radial functions, equality holds in (\ref{lem2}) for $\phi$ radial.
\end{proof}

\begin{lem}\label{inq.for_fi}
Let $\phi:\Rd\to\C$ and $0\leq m <+\infty$. Then
\begin{equation}\label{for_fi}
\int_{\Rd} |\phi|^2\frac{1}{|x|}+\sqrt{1+m^2}\int_{\Rd} |\phi|^2\leq \int_{\Rd}|\partial_r\phi|^2|x|+(1+m^2)\int_{\Rd} |\phi|^2|x|.
\end{equation}
\end{lem}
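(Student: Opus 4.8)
The plan is to reduce \eqref{for_fi} to a one-dimensional weighted inequality on $(0,\infty)$ and then recognise the latter as a perfect square. First I would pass to polar coordinates $x=r\omega$, $r=|x|$, $\omega\in S^2$, and set $u(r)=\phi(r\omega)$, so that $\partial_r\phi=u'$ along each ray and $dx=r^2\,dr\,d\omega$. Then $\int_{\Rd}|\phi|^2/|x|=\int_{S^2}\int_0^\infty|u|^2r\,dr\,d\omega$, $\int_{\Rd}|\phi|^2=\int_{S^2}\int_0^\infty|u|^2r^2\,dr\,d\omega$, $\int_{\Rd}|\partial_r\phi|^2|x|=\int_{S^2}\int_0^\infty|u'|^2r^3\,dr\,d\omega$ and $\int_{\Rd}|\phi|^2|x|=\int_{S^2}\int_0^\infty|u|^2r^3\,dr\,d\omega$. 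Writing $a=\sqrt{1+m^2}\ge 1$, it therefore suffices to show, for each fixed $\omega$, that
\begin{equation*}
Q[u]:=\int_0^\infty\Bigl(r^3|u'|^2+(a^2r^3-ar^2-r)|u|^2\Bigr)\,dr\ \ge\ 0,
\end{equation*}
and then integrate in $\omega$.

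For the second step I would complete the square, guided by Remark \ref{ohar2}: the expected extremals behave like $g(r)=r^{-1}e^{-ar}$, and $-g'/g=\tfrac1r+a$. Expanding $|u'+(\tfrac1r+a)u|^2$, using $2\operatorname{Re}(u'\bar u)=(|u|^2)'$ and integrating that middle term by parts, one obtains the identity
\begin{equation*}
\int_0^\infty r^3\Bigl|u'+\bigl(\tfrac1r+a\bigr)u\Bigr|^2\,dr=Q[u]+\Bigl[(r^2+ar^3)|u|^2\Bigr]_{r=0}^{r=\infty},
\end{equation*}
because $r^3(\tfrac1r+a)^2=r+2ar^2+a^2r^3$ while $\bigl(r^3(\tfrac1r+a)\bigr)'=2r+3ar^2$, and the difference of these is exactly $a^2r^3-ar^2-r$. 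If the boundary term vanishes this gives $Q[u]\ge 0$, with equality precisely when $u'+(\tfrac1r+a)u\equiv 0$, i.e. $u(r)=Cr^{-1}e^{-ar}$ — in agreement with the minimizing sequence of Remark \ref{ohar2} (note that $g$ itself is not admissible, which is exactly why one only gets a minimizing sequence and not a minimiser).

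The hard part — really the only delicate point — is the vanishing of the boundary term $(r^2+ar^3)|u|^2$ as $r\to 0$ and $r\to\infty$, i.e. pinning down the class of admissible $\phi$ and justifying the integration by parts. For $\phi\in\mathcal{C}_c^{\infty}(\Rd\setminus\{0\})$ there is nothing to check; for general $\phi$ I would argue by density (truncating near $0$ and near $\infty$) or simply read \eqref{for_fi} as an inequality in $[0,+\infty]$, which is trivial when the right-hand side is infinite. Once \eqref{for_fi} is in place it is the radial ingredient that, together with Lemma \ref{Lemma2}, is intended to be fed into the proof of Theorem \ref{final_ineq}.
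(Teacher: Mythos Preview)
Your proof is correct and is essentially the same as the paper's: the paper expands the square
\[
0\le\int_{\Rd}\Bigl||x|^{1/2}\partial_r\phi+|x|^{-1/2}\phi+\sqrt{1+m^2}\,|x|^{1/2}\phi\Bigr|^2
=\int_{\Rd}|x|\,\Bigl|\partial_r\phi+\bigl(\tfrac{1}{|x|}+a\bigr)\phi\Bigr|^2,
\]
which in polar coordinates is exactly your $\int_{S^2}\int_0^\infty r^3|u'+(\tfrac1r+a)u|^2\,dr\,d\omega$, and then uses the same two integrations by parts (written as $\int\partial_r|\phi|^2=-2\int|\phi|^2/|x|$ and $\int|x|\partial_r|\phi|^2=-3\int|\phi|^2$). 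The only difference is order of presentation: you pass to one dimension first and complete the square there, whereas the paper stays in $\Rd$ throughout; your version has the advantage of making the boundary terms and the equality case $u=Cr^{-1}e^{-ar}$ visible.
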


\begin{proof}
Let us develop the following square
\begin{eqnarray*}
0&\leq&\int_{\Rd}\left||x|^{1/2}\partial_r\phi+\frac{1}{|x|^{1/2}}\phi+\sqrt{1+m^2}|x|^{1/2}\phi\right|^2\\
&=&\int_{\Rd}|x||\partial_r\phi|^2+\int_{\Rd}|\phi|^2\frac{1}{|x|}+(1+m^2)\int_{\Rd}|\phi|^2|x|\\
&+&\int_{\Rd}\partial_r|\phi|^2+\sqrt{1+m^2}\int_{\Rd}|x|\partial_r|\phi|^2+2\sqrt{1+m^2}\int_{\Rd}|\phi|^2\\
&=&\int_{\Rd}|x||\partial_r\phi|^2-\int_{\Rd}|\phi|^2\frac{1}{|x|}+(1+m^2)\int_{\Rd}|\phi|^2|x|-\sqrt{1+m^2}\int_{\Rd}|\phi|^2,
\end{eqnarray*}
which gives the desired inequality. Observe that the last identity holds because
 \begin{eqnarray*}
\int_{\Rd}\partial_r|\phi(x)|^2dx=-\int_{\Rd}\frac{2}{|x|}|\phi|^2
\end{eqnarray*}
and
\begin{eqnarray*}
\int_{\Rd}|x|\partial_r|\phi|^2=-3\int_{\Rd}|\phi|^2,
\end{eqnarray*}
which have been computed writing them in polar coordinates and integrating by parts.
\end{proof}

\begin{remark}\label{oharra}
It is easy to check that for $\phi_0= C r^{-1}e^{-\sqrt{1+m^2}r}$ and $C\in\C$ equality holds in (\ref{for_fi}) if we understand
$$\int_{\Rd}\left(|\partial_r\phi|^2|x|-  |\phi|^2\frac{1}{|x|}\right)dx=\lim_{\delta\to 0}\int_{|x|\geq\delta}\left(|\partial_r\phi|^2|x|-  |\phi|^2\frac{1}{|x|}\right)dx=0.$$
\end{remark}

From Lemma \ref{Lemma2} and Lemma \ref{inq.for_fi} we obtain the following result.

\begin{cor}\label{inq.for_fi_nabla}
Let $\phi:\Rd\to \C^2$ and $0\leq m <+\infty$, then
\begin{equation}\label{for_fi_nabla}
\sqrt{1+m^2}\int_{\Rd} |\phi|^2\leq \int_{\Rd}|\sigma\cdot\nabla\phi|^2|x|+(1+m^2)\int_{\Rd} |\phi|^2|x|-\int_{\Rd} |\phi|^2\frac{1}{|x|}.
\end{equation}
\end{cor}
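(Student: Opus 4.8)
The plan is to combine the two preceding results in a direct way. First I would upgrade Lemma \ref{inq.for_fi}, which is stated for scalar $\phi$, to the case of $\C^2$-valued $\phi$: writing $\phi=(\phi_1,\phi_2)$ one has $|\phi|^2=|\phi_1|^2+|\phi_2|^2$ and $|\partial_r\phi|^2=|\partial_r\phi_1|^2+|\partial_r\phi_2|^2$, so applying (\ref{for_fi}) to $\phi_1$ and to $\phi_2$ and adding yields
$$\int_{\Rd}|\phi|^2\frac{1}{|x|}+\sqrt{1+m^2}\int_{\Rd}|\phi|^2\leq \int_{\Rd}|\partial_r\phi|^2|x|+(1+m^2)\int_{\Rd}|\phi|^2|x|$$
now for $\phi:\Rd\to\C^2$.

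Next I would transpose the Hardy term $\int_{\Rd}|\phi|^2/|x|$ to the right-hand side, obtaining
$$\sqrt{1+m^2}\int_{\Rd}|\phi|^2\leq \int_{\Rd}|\partial_r\phi|^2|x|+(1+m^2)\int_{\Rd}|\phi|^2|x|-\int_{\Rd}|\phi|^2\frac{1}{|x|},$$
and finally replace $\int_{\Rd}|\partial_r\phi|^2|x|$ by the larger quantity $\int_{\Rd}|\sigma\cdot\nabla\phi|^2|x|$ via Lemma \ref{Lemma2}. This produces exactly (\ref{for_fi_nabla}).

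The only point that needs a little care is that adding the two scalar inequalities and transposing the Hardy term are legitimate only when the integrals involved are finite, and that the integration-by-parts identities underlying Lemmas \ref{Lemma2} and \ref{inq.for_fi} hold without boundary contributions. I would therefore carry out the argument first for $\phi\in\mathcal{C}_c^{\infty}(\Rd\setminus\{0\},\C^2)$, where everything converges and the boundary terms vanish, and then extend to general $\phi$ by density, interpreting the combination $\int_{\Rd}(|\partial_r\phi|^2|x|-|\phi|^2/|x|)$ as the limit over $|x|\geq\delta$ as $\delta\to 0$ in the spirit of Remark \ref{oharra} when necessary. I do not expect a genuine obstacle: the corollary is essentially a bookkeeping combination of the two lemmas, the substantive work having been done in their proofs.
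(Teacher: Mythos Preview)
Your proposal is correct and is exactly the argument the paper intends: the corollary is stated immediately after Lemmas \ref{Lemma2} and \ref{inq.for_fi} with the one-line justification ``From Lemma \ref{Lemma2} and Lemma \ref{inq.for_fi} we obtain the following result,'' and your componentwise application of (\ref{for_fi}) followed by the replacement of $\int|x||\partial_r\phi|^2$ by $\int|x||\sigma\cdot\nabla\phi|^2$ via (\ref{lem2}) is precisely that combination. Your extra care about density and the interpretation of the difference $\int(|x||\partial_r\phi|^2-|\phi|^2/|x|)$ in the spirit of Remark \ref{oharra} is appropriate and goes slightly beyond what the paper spells out.
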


At this point, we have all the necessary results for proving the main Hardy-Dirac inequality of this section.

\begin{proof}[Proof of Theorem \ref{final_ineq}]
Without loss of generality, let us assume that $\epsilon=1$. This can be done because of the scaling invariance. We will develop the right hand side term in (\ref{final}),
\begin{eqnarray*}
 \int_{\Rd}|x| |( \alpha\cdot\nabla+im\beta\pm \I_4)\psi|^2&=& \int_{\Rd}|x| |(\alpha\cdot\nabla)\psi|^2+ \int_{\Rd}|x| |(im\beta\pm \I_4)\psi|^2\\
 &+&\langle  \alpha\cdot\nabla \psi, (im\beta\pm \I_4) \psi\rangle_{L^2(|x|)}\\
 &+&\langle (im\beta\pm \I_4) \psi,  \alpha\cdot\nabla \psi\rangle_{L^2(|x|)}.
\end{eqnarray*}
The last two terms in the sum can be written and estimated as follows
\begin{eqnarray*}
&&\langle  \alpha\cdot\nabla \psi, (im\beta\pm \I_4) \psi\rangle_{L^2(|x|)}+\langle (im\beta\pm \I_4) \psi,  \alpha\cdot\nabla \psi\rangle_{L^2(|x|)}\\
&=&\langle  \sigma\cdot\nabla \chi, (im\pm \I_4) \phi|x|\rangle_{L^2}+\langle  \sigma\cdot\nabla \phi, (-im\pm \I_4) \chi |x|\rangle_{L^2}\\
&+&\langle (im\pm \I_4) \phi, ( \sigma\cdot\nabla \chi) |x|\rangle_{L^2}+\langle (-im\pm \I_4) \chi, ( \sigma\cdot\nabla \phi) |x|\rangle_{L^2}\\
&=&-\langle  \chi, (im\pm \I_4) ( \sigma\cdot\nabla\phi)|x|\rangle_{L^2}-\langle  \chi, (im\pm \I_4) \phi( \sigma\cdot\nabla|x|)\rangle_{L^2}\\
&-&\langle \phi, (-im\pm \I_4) (  \sigma\cdot\nabla\chi) |x|\rangle_{L^2}-\langle \phi, (-im\pm \I_4) \chi(  \sigma\cdot\nabla |x|)\rangle_{L^2}\\
&+&\langle (im\pm \I_4) \phi, ( \sigma\cdot\nabla \chi) |x|\rangle_{L^2}+\langle (-im\pm \I_4) \chi, ( \sigma\cdot\nabla \phi) |x|\rangle_{L^2}\\
&=&-\left\langle  \chi, (im\pm \I_4)\left( \sigma\cdot\frac{x}{|x|}\right)\phi\right\rangle_{L^2}-\left\langle \phi, (-im\pm \I_4) \left( \sigma\cdot\frac{x}{|x|}\right)\chi\right\rangle_{L^2}\\
&=&2\Re\int_{\Rd}\chi\cdot\overline{(im\pm \I_4)\left( \sigma\cdot\frac{x}{|x|}\right)\phi}.
\end{eqnarray*}
Notice that we have used integration by parts and the fact that $i\sigma\cdot\nabla$ is symmetric in $L^2$ to rewrite the expression. By using the Cauchy-Schwarz inequality we get the estimate
\begin{align}\label{bound}
&2\Re\int_{\Rd}\chi\cdot\overline{(im\pm \I_4)\left( \sigma\cdot\frac{x}{|x|}\right)\phi}\leq 2 \left|\int_{\Rd}\chi\cdot\overline{(im\pm \I_4)\left( \sigma\cdot\frac{x}{|x|}\right)\phi}\right|\\
&\leq\sqrt{1+m^2}\|\chi\|_{L^2}\|\phi\|_{L^2}\leq\sqrt{1+m^2}\|\chi\|_{L^2}^2+\sqrt{1+m^2}\|\phi\|_{L^2}^2.\notag
\end{align}
Now by inequality (\ref{for_fi_nabla}) we conclude
\begin{eqnarray*}
&&|\langle  \alpha\cdot\nabla \psi, (im\beta\pm \I_4) \psi\rangle_{L^2(|x|)}+\langle (im\beta\pm \I_4) \psi,  \alpha\cdot\nabla \psi\rangle_{L^2(|x|)}|\\
&=&2 \left|\int_{\Rd}\chi\cdot\overline{(im\pm \I_4)\left( \sigma\cdot\frac{x}{|x|}\right)\phi}\right|\\
&\leq&\int_{\Rd}|\sigma\cdot\nabla\phi|^2|x|+(1+m^2)\int_{\Rd} |\phi|^2|x|-\int_{\Rd} |\phi|^2\frac{1}{|x|}\\
&+&\int_{\Rd}|\sigma\cdot\nabla\chi|^2|x|+(1+m^2)\int_{\Rd} |\chi|^2|x|-\int_{\Rd} |\chi|^2\frac{1}{|x|}.
\end{eqnarray*}
Thus we can estimate from below the right hand side of (\ref{final}) as
\begin{align}\label{ali}
 &\int_{\Rd}|x| |( \alpha\cdot\nabla+im\beta\pm \I_4)\psi|^2\notag\\
 &\geq \int_{\Rd}|x| |(\sigma\cdot\nabla)\phi|^2+\int_{\Rd}|x| |(\sigma\cdot\nabla)\chi|^2\\
 &+ (1+m^2)\int_{\Rd} |\phi|^2|x|+ (1+m^2)\int_{\Rd} |\chi|^2|x|\notag\\
 &-\int_{\Rd}|\sigma\cdot\nabla\phi|^2|x|-(1+m^2)\int_{\Rd} |\phi|^2|x|+\int_{\Rd} |\phi|^2\frac{1}{|x|}\notag\\
 &-\int_{\Rd}|\sigma\cdot\nabla\chi|^2|x|-(1+m^2)\int_{\Rd} |\chi|^2|x|+\int_{\Rd} |\chi|^2\frac{1}{|x|}\notag\\
 &=\int_{\Rd} |\psi|^2\frac{1}{|x|}.\notag
 \end{align}

In order to see the sharpness of the inequality we will recover the $\epsilon$. If we choose
 $$\chi=\lambda\left(\sigma\cdot\frac{x}{|x|}\right)\phi, \quad \lambda\in \C,$$
 then identity holds in (\ref{bound}). Now recall Remark \ref{oharra}. If we take $\phi_0^{\epsilon,m}=C r^{-1}e^{-\sqrt{\epsilon^2+m^2} r}$, $C\in\C^2$,  we get an identity in (\ref{ali}) in the following sense
$$\int_{\Rd}\left(|x| |( \alpha\cdot\nabla+im\beta\pm \I_4)\psi_0^{\epsilon,m}|^2- |\psi_0^{\epsilon,m}|^2\frac{1}{|x|}\right)dx =0$$ 
 where $\psi_0^{\epsilon,m}$ is defined as in Remark \ref{ohar2}. This completes the proof.
\end{proof}

The following lemmas are also required for the proof of Theorem \ref{teorema}.
\begin{lem}\label{Lemma1}
Let $\phi:\Rd\to \C^2$. Then
\begin{equation}\label{lem1}
\int_{\Rd} |\phi|^2\leq \frac{2}{3}\left(\int_{\Rd}r |\phi|^2\right)^{1/2}\left(\int_{\Rd}r |\partial_r\phi|^2\right)^{1/2}.
\end{equation}
Equality holds for $\phi=Ce^{-\lambda r}$, where $C\in\C$ and $\lambda>0$.
\end{lem}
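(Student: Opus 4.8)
The plan is to prove the inequality by integrating by parts, just as in the proof of Lemma~\ref{inq.for_fi}, and then optimizing over a scaling parameter. The starting observation is that, writing everything in polar coordinates, the quantity $\int_{\Rd}|\phi|^2$ can be related to $\int_{\Rd}\partial_r|\phi|^2$ via integration by parts: since $\int_{\Rd}\partial_r|\phi|^2\,dx=-\int_{\Rd}\frac{2}{|x|}|\phi|^2$ (the identity already recorded in the proof of Lemma~\ref{inq.for_fi}), and more to the point $\int_{\Rd}|x|\,\partial_r|\phi|^2=-3\int_{\Rd}|\phi|^2$, we can express $3\int_{\Rd}|\phi|^2$ as $-\int_{\Rd}|x|\,\partial_r|\phi|^2 = -2\Re\int_{\Rd}|x|\,\phi\cdot\overline{\partial_r\phi}$.

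From there the idea is a one-parameter Cauchy--Schwarz. For any $\lambda>0$ consider the nonnegative square
\[
0\le \int_{\Rd}\left||x|^{1/2}\partial_r\phi+\lambda |x|^{1/2}\phi\right|^2
= \int_{\Rd}|x||\partial_r\phi|^2 + 2\lambda\,\Re\int_{\Rd}|x|\,\phi\cdot\overline{\partial_r\phi} + \lambda^2\int_{\Rd}|x||\phi|^2.
\]
Using $2\Re\int_{\Rd}|x|\,\phi\cdot\overline{\partial_r\phi}=-3\int_{\Rd}|\phi|^2$, this rearranges to
\[
3\lambda\int_{\Rd}|\phi|^2 \le \int_{\Rd}|x||\partial_r\phi|^2 + \lambda^2\int_{\Rd}|x||\phi|^2 .
\]
Now minimize the right-hand side over $\lambda>0$: the optimal choice is $\lambda=\left(\int_{\Rd}|x||\partial_r\phi|^2\big/\int_{\Rd}|x||\phi|^2\right)^{1/2}$, which yields $3\lambda\int|\phi|^2\le 2\lambda\left(\int|x||\partial_r\phi|^2\right)^{1/2}\left(\int|x||\phi|^2\right)^{1/2}$, i.e. exactly \eqref{lem1} after dividing by $3\lambda$. (One should treat separately the degenerate cases where $\int|x||\phi|^2$ or $\int|x||\partial_r\phi|^2$ is $0$ or $\infty$, where the inequality is trivial.)

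For the equality case: equality in the Cauchy--Schwarz step forces $\partial_r\phi=-\lambda\phi$ pointwise, hence $\phi(x)=C(x/|x|)e^{-\lambda|x|}$ along rays; combined with the requirement that the boundary terms in the integration by parts vanish and that $\phi$ be genuinely in the relevant weighted spaces, this singles out $\phi=Ce^{-\lambda r}$ with $C\in\C$ constant and $\lambda>0$, and a direct computation confirms equality there. The main point to be careful about is the justification of the integration by parts $\int_{\Rd}|x|\,\partial_r|\phi|^2=-3\int_{\Rd}|\phi|^2$ — one needs $\phi$ decaying fast enough at infinity and no boundary contribution at the origin — but this is exactly the same density/approximation issue already handled implicitly in Lemma~\ref{inq.for_fi}, so I expect no real obstacle; the estimate is then extended to the general admissible class by density.
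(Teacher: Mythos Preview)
Your proof is correct and follows essentially the same route as the paper. Both arguments hinge on the identity $\int_{\Rd}|\phi|^2=-\tfrac{2}{3}\Re\int_{\Rd} r\,\overline{\phi}\,\partial_r\phi$; the paper obtains it by writing $|\phi(r\omega)|^2$ via the fundamental theorem of calculus and then applying Fubini, while you quote the equivalent integration-by-parts formula $\int_{\Rd}|x|\,\partial_r|\phi|^2=-3\int_{\Rd}|\phi|^2$ already recorded in the proof of Lemma~\ref{inq.for_fi}. After that, the paper applies Cauchy--Schwarz directly, whereas you expand the square $\int||x|^{1/2}\partial_r\phi+\lambda|x|^{1/2}\phi|^2\ge 0$ and optimize in $\lambda$, which is of course just the standard proof of Cauchy--Schwarz; the two are the same argument. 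One small remark: equality in your square forces $\partial_r\phi=-\lambda\phi$, giving $\phi(x)=C(x/|x|)e^{-\lambda r}$ with an arbitrary angular profile $C(\omega)$, so your claim that constancy of $C$ is ``singled out'' is a slight overreach---but the lemma only asserts that $\phi=Ce^{-\lambda r}$ \emph{gives} equality, not that these are the only equality cases, so this does not affect the proof.
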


\begin{proof}
From the fundamental theorem of calculus and following the same approach of \cite{DV} we have 
$$|\phi(r\omega)|^2=\Re\left(\int_r^{\infty}-2 \overline{\phi(t \omega)}\, (\omega\cdot\nabla\phi(t \omega))dt\right).$$
Then
\begin{eqnarray*}
\int_{\Rd}|\phi|^2 dx&=& \int_{S^2}d\omega\int_0^{\infty}|\phi(r\omega)|^2 r^2 dr\\
&=&-2\Re\left(\int_{S^2}d\omega\int_0^{\infty}r^2 dr\int_r^{\infty}\overline{\phi(t \omega)}\, (\omega\cdot\nabla\phi(t \omega))dt\right)\\
&=&-2\Re\left(\int_{S^2}d\omega\int_0^{\infty}\overline{\phi(t \omega)}\, (\omega\cdot\nabla\phi(t \omega))\int_0^t r^2 dr dt\right)\\
&=&-\frac{2}{3}\Re\left(\int_{\Rd}\overline{\phi(x)}(\partial_r \phi(x))rdx\right)\\
&\leq&\frac{2}{3}\left(\int_{\Rd}r|\phi|^2\right)^{1/2}\left(\int_{\Rd}r|\partial_r\phi|^2\right)^{1/2}.
\end{eqnarray*}
Let us check that equality holds for $\phi=Ce^{-\lambda r}$. On the one hand,
\begin{eqnarray*}
\int_{\Rd}|\phi|^2&=&4\pi C\int_0^{\infty}e^{-2\lambda r} r^2 dr=\frac{4\pi C}{(2\lambda)^3}\int_0^{\infty}e^{- r} r^2 dr.
\end{eqnarray*}
On the other hand,
$$\int_{\Rd}r|\partial_r\phi|^2=\lambda^2\int_{\Rd}|\phi|^2r.$$
Finally,
$$\int_{\Rd}|\phi|^2r=4\pi C \int_0^{\infty}e^{-2\lambda r} r^3 dr=\frac{3C}{(2\lambda)^4}4\pi\int_0^{\infty}e^{- r} r^2 dr.$$
Therefore, equality holds.
\end{proof}

From Lemma \ref{Lemma2} and Lemma \ref{Lemma1} we conclude the following result.
\begin{lem}\label{Lemma3}
Let $\phi:\Rd\to \C^2$. Then
\begin{equation}\label{lem3}
\int_{\Rd} |\phi|^2\leq \frac{2}{3}\left(\int_{\Rd}r |\phi|^2\right)^{1/2}\left(\int_{\Rd}r |\sigma \cdot \nabla\phi|^2\right)^{1/2}.
\end{equation}
Moreover, equality holds for $\phi= Ce^{-\lambda r}$, where $C\in\C^2$ and $\lambda>0$.
\end{lem}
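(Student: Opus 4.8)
The plan is to obtain (\ref{lem3}) directly from the two preceding lemmas: it is exactly inequality (\ref{lem1}) with the radial derivative $\partial_r\phi$ replaced by the full gradient term $\sigma\cdot\nabla\phi$, and Lemma \ref{Lemma2} bounds this difference in the favourable direction.

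First I would apply Lemma \ref{Lemma1} to $\phi$, giving
\[
\int_{\Rd} |\phi|^2\leq \frac{2}{3}\left(\int_{\Rd}r |\phi|^2\right)^{1/2}\left(\int_{\Rd}r |\partial_r\phi|^2\right)^{1/2}.
\]
Then I would invoke Lemma \ref{Lemma2}, that is $\int_{\Rd}r|\partial_r\phi|^2\leq\int_{\Rd}r|\sigma\cdot\nabla\phi|^2$, and substitute it into the last factor; since $t\mapsto t^{1/2}$ is nondecreasing, this yields (\ref{lem3}) immediately.

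For the equality statement I would take $\phi=Ce^{-\lambda r}$ with $C\in\C^2$, $\lambda>0$, and check that both steps above are equalities for this choice. Such a $\phi$ is radial, so the equality clause of Lemma \ref{Lemma2} gives $\int_{\Rd}r|\partial_r\phi|^2=\int_{\Rd}r|\sigma\cdot\nabla\phi|^2$; moreover $\partial_r\phi=-\lambda\phi$, so the Cauchy--Schwarz step in the proof of Lemma \ref{Lemma1} is saturated (the computation there goes through verbatim with $C\in\C^2$ in place of $C\in\C$). Hence the whole chain of inequalities collapses to an identity. There is no genuine obstacle here; the only point worth stressing is that the extremizer $Ce^{-\lambda r}$ is simultaneously radial, which is what makes Lemma \ref{Lemma2} sharp, and of pure exponential type, which is what makes Lemma \ref{Lemma1} sharp.
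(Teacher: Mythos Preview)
Your proposal is correct and matches the paper's approach exactly: the paper simply states that Lemma \ref{Lemma3} follows from Lemma \ref{Lemma2} and Lemma \ref{Lemma1}, without writing out any further details. Your added verification of the equality case for $\phi=Ce^{-\lambda r}$ (radiality giving equality in Lemma \ref{Lemma2}, and $\partial_r\phi=-\lambda\phi$ saturating the Cauchy--Schwarz step in Lemma \ref{Lemma1}) is the natural and correct argument.
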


Finally we use inequality (\ref{lem3}) for proving the next result.

\begin{pro}\label{proImproved}
 For any $m\geq 0$ finite,
 \begin{equation}\label{ineq.2.improved}
\frac{8}{9} \int_{\Rd} |x||i \alpha\cdot\nabla\psi|^2 \leq \int_{\Rd} |x||(i \alpha\cdot\nabla-m\beta\pm i)\psi|^2.
 \end{equation}
 Equality holds for $\phi= Ce^{-\lambda r}$ and $\chi=i\sigma\cdot\frac{x}{|x|}\phi$ where $C\in\C^2$ and $\lambda=3 \sqrt{1+m^2}$.
 \end{pro}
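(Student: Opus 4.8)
The plan is to split $\psi=(\phi,\chi)$ with $\phi,\chi\colon\Rd\to\C^2$ and reduce (\ref{ineq.2.improved}) to Lemma \ref{Lemma3} applied to $\phi$ and to $\chi$ separately. Multiplying the operator in the right-hand side of (\ref{ineq.2.improved}) by $i$ (which leaves the modulus unchanged) replaces it by $\alpha\cdot\nabla+im\beta\pm\I_4$, and $|i\alpha\cdot\nabla\psi|=|\alpha\cdot\nabla\psi|$; thus (\ref{ineq.2.improved}) is equivalent to $\tfrac{8}{9}\int_{\Rd}|x|\,|\alpha\cdot\nabla\psi|^2\le\int_{\Rd}|x|\,|(\alpha\cdot\nabla+im\beta\pm\I_4)\psi|^2$. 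I would expand the right-hand side exactly as in the proof of Theorem \ref{final_ineq}: since $|(im\beta\pm\I_4)\psi|^2=(1+m^2)|\psi|^2$, and the mixed term, after integration by parts (using that $i\sigma\cdot\nabla$ is symmetric), equals $2\Re\int_{\Rd}\chi\cdot\overline{(im\pm\I_4)(\sigma\cdot\frac{x}{|x|})\phi}$, whose absolute value is at most $\sqrt{1+m^2}\bigl(\|\phi\|_{L^2}^2+\|\chi\|_{L^2}^2\bigr)$ by the estimate (\ref{bound}), one obtains
\[
\int_{\Rd}|x|\,|(\alpha\cdot\nabla+im\beta\pm\I_4)\psi|^2\ \ge\ \int_{\Rd}|x|\,|\alpha\cdot\nabla\psi|^2+(1+m^2)\int_{\Rd}|x|\,|\psi|^2-\sqrt{1+m^2}\bigl(\|\phi\|_{L^2}^2+\|\chi\|_{L^2}^2\bigr).
\]
After subtracting $\tfrac89\int|x||\alpha\cdot\nabla\psi|^2$ and using $|\alpha\cdot\nabla\psi|^2=|\sigma\cdot\nabla\phi|^2+|\sigma\cdot\nabla\chi|^2$, it is enough to prove, separately for $u=\phi$ and $u=\chi$,
\[
\sqrt{1+m^2}\int_{\Rd}|u|^2\ \le\ \frac19\int_{\Rd}|x|\,|\sigma\cdot\nabla u|^2+(1+m^2)\int_{\Rd}|x|\,|u|^2 .
\]

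This one-component inequality follows from Lemma \ref{Lemma3}: writing $X=(1+m^2)\int r|u|^2$ and $Y=\int r|\sigma\cdot\nabla u|^2$, Lemma \ref{Lemma3} gives $\sqrt{1+m^2}\int|u|^2\le\tfrac23\sqrt{XY}$, and the elementary inequality $\tfrac23\sqrt{XY}\le X+\tfrac19Y$ (equivalently $2\sqrt{(3X)(\tfrac13Y)}\le 3X+\tfrac13Y$) yields the claim. Adding the inequalities for $\phi$ and $\chi$ completes the proof of (\ref{ineq.2.improved}). The value $\tfrac89=1-\tfrac19$ is forced by this argument, since the $\tfrac19$ is exactly the coefficient needed for the arithmetic–geometric mean step to compensate the factor $\tfrac23$ of Lemma \ref{Lemma3}.

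For the sharpness I would read off the equality cases. Lemma \ref{Lemma3} is an equality precisely when $u$ is $e^{-\lambda r}$ times an admissible angular spinor — a constant spinor for $\phi$, and $\sigma\cdot\frac{x}{|x|}$ times a constant spinor for $\chi$, the latter lying in the eigenspace $\sigma\cdot L=-2$, which also achieves equality in the angular inequality behind Lemma \ref{Lemma2}; the arithmetic–geometric mean step is an equality when $\tfrac19\int r|\sigma\cdot\nabla u|^2=(1+m^2)\int r|u|^2$; and the bound on the mixed term is an equality when $|\chi|=|\phi|$ pointwise, $\chi$ is a scalar multiple of $(\sigma\cdot\frac{x}{|x|})\phi$, and the pairing is real with the right sign. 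For $\phi=Ce^{-\lambda r}$ one has $\int r|\sigma\cdot\nabla\phi|^2=\lambda^2\int r|\phi|^2$, so the second condition forces $\lambda^2=9(1+m^2)$, i.e. $\lambda=3\sqrt{1+m^2}$; taking $\chi$ to be the corresponding unit-modulus multiple of $\sigma\cdot\frac{x}{|x|}\phi$ and using $|\sigma\cdot\nabla\chi|^2=(\tfrac2{|x|}-\lambda)^2|\phi|^2$ one verifies directly that all the inequalities above become equalities. I expect the only genuinely delicate point to be the handling of the mixed term: one must check that the integration by parts produces precisely the boundary-type integral appearing in the proof of Theorem \ref{final_ineq}, and that replacing it by the crude bound $\sqrt{1+m^2}(\|\phi\|_{L^2}^2+\|\chi\|_{L^2}^2)$ does not prevent equality — this is what pins down the extremal pair $(\phi,\chi)$.
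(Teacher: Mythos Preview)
Your argument is correct and follows exactly the paper's route: expand the square as in the proof of Theorem~\ref{final_ineq}, control the cross term via (\ref{bound}), and then invoke Lemma~\ref{Lemma3} together with the arithmetic--geometric mean step $\tfrac23\sqrt{XY}\le X+\tfrac19 Y$ to produce the $\tfrac19$ coefficient and hence the $\tfrac89$. Your discussion of the equality case is actually more careful than the paper's (which merely says it ``can be easily checked''): leaving the scalar in $\chi$ as ``the corresponding unit-modulus multiple'' is the right formulation, since the extremal constant in front of $\sigma\cdot\frac{x}{|x|}$ genuinely depends on $m$ and on the choice of sign.
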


\begin{remark}
The fact that inequality (\ref{ineq.2.improved}) is sharp, in the sense that the constant can not be improved, confirms that the proof of (\ref{final}) is not an immediate consequence of the Hardy inequality
$$ \int_{\Rd}\frac{1}{|x|}|\psi|^2\leq  \int_{\Rd}|x||i \alpha\cdot\nabla\psi|^2.$$
\end{remark}

\begin{proof}
We follow the approach of Theorem \ref{final_ineq}. Therefore, as we have seen previously we have that
\begin{eqnarray*}
&& \int_{\Rd}|x| |(i \alpha\cdot\nabla-m\beta\pm i)\psi|^2\\
&=&  \int_{\Rd}|x| |(\sigma\cdot\nabla)\phi|^2+\int_{\Rd}|x| |(\sigma\cdot\nabla)\chi|^2\\
 &+& (1+m^2)\int_{\Rd} |\phi|^2|x|+ (1+m^2)\int_{\Rd} |\chi|^2|x|\\
 &+&\left\langle  \chi, (-m\pm i)\left(i \sigma\cdot\frac{x}{|x|}\right)\phi\right\rangle_{L^2}+\left\langle \phi, (m\pm i) \left(i \sigma\cdot\frac{x}{|x|}\right)\chi\right\rangle_{L^2}.
\end{eqnarray*}
From inequality (\ref{lem3}) we obtain
\begin{eqnarray*}
\sqrt{1+m^2}\|\phi\|_{L^2}^2&\leq& \frac{2}{3}\sqrt{1+m^2}\left(\int_{\Rd}|x| |\phi|^2\right)^{1/2}\left(\int_{\Rd}|x| |\sigma \cdot \nabla\phi|^2\right)^{1/2}\\
&\leq& (1+m^2)\int_{\Rd}|x| |\phi|^2+\frac{1}{9}\int_{\Rd}|x| |\sigma \cdot \nabla\phi|^2.
\end{eqnarray*}
By the last inequality and (\ref{bound}) we conclude
\begin{eqnarray*}
 \int_{\Rd}|x| |(i \alpha\cdot\nabla-m\beta\pm i)\psi|^2&\geq&  \int_{\Rd}|x| |(\sigma\cdot\nabla)\phi|^2+\int_{\Rd}|x| |(\sigma\cdot\nabla)\chi|^2\\
 &+& (1+m^2)\int_{\Rd} |\phi|^2|x|+ (1+m^2)\int_{\Rd} |\chi|^2|x|\\
 &-& (1+m^2)\int_{\Rd}|x| |\phi|^2-\frac{1}{9}\int_{\Rd}|x| |\sigma \cdot \nabla\phi|^2\\
 &-& (1+m^2)\int_{\Rd}|x| |\chi|^2-\frac{1}{9}\int_{\Rd}|x| |\sigma \cdot \nabla\chi|^2\\
 &=&\frac{8}{9}\int_{\Rd}|x| |(i \alpha\cdot\nabla)\psi|^2.
 \end{eqnarray*}
The equality case can be easily checked.
\end{proof}

\section{Proof of Theorem \ref{teorema}}\label{sec:pde} 

 The proof of the first part of the theorem is divided into five steps.

\emph{Step 1.} We start by proving that for all $f\in L^2(1+|x|)$ there exists a unique function $\psi$ in 
$$\widetilde{\mathcal{D}}=\{\psi\in L^2(1+|x|): H\psi\in L^2(1+|x|)\}$$ 
such that 
\begin{equation}\label{eq.}
(H + i )\psi=f.
\end{equation}
We rewrite (\ref{eq.}) as 
$$(i\alpha\cdot \nabla-m\beta - i)\psi+\V\psi = f,$$ 
or equivalently,
$$|x|^{1/2}(i\alpha\cdot \nabla -m\beta- i)|x|^{1/2}\frac{1}{|x|^{1/2}}\psi+ |x|\V \frac{1}{|x|^{1/2}}\psi = |x|^{1/2}f.$$
Denote $w=|x|^{-1/2}\psi,\, F=|x|^{1/2}f \;\text{and}\; \widetilde{K}=|x|^{1/2} (i\alpha\cdot\nabla-m\beta - i )|x|^{1/2}$.
Hence, we have 
$$\widetilde{K}w+|x|\V w=F.$$ 
Let $K$ be the inverse of $\widetilde{K}$, that is, $K=|x|^{-1/2}(i\alpha\cdot\nabla-m\beta - i )^{-1} |x|^{-1/2}$, and apply $K$ to the previous equation. Thus we get
$$(\I+K|x|\V)w=KF.$$
This equation has a unique solution if $\|K|x|\V \|<1$, that is, if
$$\sup_{F\neq 0} \frac{\|K|x|\V F\|_{L^2}}{\|F\|_{L^2}}< 1.$$
Let us check that for all $F\neq 0$ we have
$$\int_{\Rd}|K|x|\V F|^2\leq \nu \int_{\Rd}|F|^2, \quad \nu<1.$$
Replacing the expression of $K$ we get
$$\int_{\Rd}\frac{1}{|x|}\left|(i\alpha\cdot\nabla -m\beta- i )^{-1}\frac{1}{|x|^{1/2}}|x|\V F\right|^2\leq \nu\int_{\Rd} |F|^2, \quad \nu<1.$$
Now taking into account  the definition of $F$ we obtain
\begin{equation}\label{3ineq.4}
\int_{\Rd}\frac{1}{|x|}|(i\alpha\cdot\nabla -m\beta- i )^{-1}|x|\V f|^2\leq \nu\int_{\Rd}|x| |f|^2, \quad \nu<1.
\end{equation}
In the previous section we have proved the inequality 
\begin{equation*}
\int_{\Rd}\frac{1}{|x|}|(i \alpha\cdot\nabla-m\beta- i)^{-1}f|^2 \leq \int_{\Rd}|x||f|^2.
\end{equation*}
The same estimate holds for $|x|\V f$, that is,
$$\int_{\Rd}\frac{1}{|x|}|(i \alpha\cdot\nabla-m\beta- i)^{-1}|x|\V f|^2 \leq \int_{\Rd}|x|||x|\V f|^2.$$
The inequality is well defined because $f\in L^2(1+|x|)$ and $\sup_{x\in\Rd\backslash \{0\}}|x||\V|<1$, therefore, $|x|\V f\in L^2(1+|x|)$ . In consequence, inequality (\ref{3ineq.4}) holds if $\V$ satisfies that
$$\int_{\Rd}|x|||x|\V f|^2\leq \nu  \int_{\Rd}|x| |f|^2, \quad \nu<1,$$
which is true by the assumption $\sup_{x\in\Rd\backslash \{0\}}||x|\V(x)|< 1$. In consequence, we have proved that there exists a solution of (\ref{eq.}) and it is unique because so is $w$. 

It can be proved in the same way that for all $f\in L^2(1+|x|)$ there exists a unique function $\psi$ in 
$\widetilde{\mathcal{D}}$ such that $(H - i )\psi=f$. We just have to change the sign of $i$ in the computations. In other words, we have proved that Ran$(H\pm i)=L^2(1+|x|)$. 

Let us now show that the solution $\psi$ of (\ref{eq.}) is in $\widetilde{\mathcal{D}}$. We write the solution of $(\I+K|x|\V)w=KF$ by using a Neumann series as
$$w=(\I+K|x|\V)^{-1}KF=\sum_{j=0}^{\infty}(-1)^j(K|x|\V)^j(KF).$$
Since we have denoted $\psi=|x|^{1/2} w$, then
$$ \psi=\sum_{j=0}^{\infty}(-1)^j |x|^{1/2}(K|x|\V)^j \frac{1}{|x|^{1/2}}(i \alpha\cdot\nabla -m\beta- i)^{-1} \frac{F}{|x|^{1/2}}.$$
Equivalently,
\begin{eqnarray*}
\psi&=&(i \alpha\cdot\nabla-m\beta - i)^{-1} f - (i \alpha\cdot\nabla-m\beta- i)^{-1}\V (i \alpha\cdot\nabla-m\beta- i)^{-1}f\\
&+&(i \alpha\cdot\nabla-m\beta- i)^{-1}\V(i \alpha\cdot\nabla-m\beta- i)^{-1}\V (i \alpha\cdot\nabla-m\beta- i)^{-1}f+\dots\\
&=&\sum_{j=0}^{\infty}(-1)^j[(i \alpha\cdot\nabla-m\beta- i)^{-1}\V]^j(i \alpha\cdot\nabla-m\beta- i)^{-1}f.
\end{eqnarray*}
Since $f\in L^2(1+|x|)$, in particular, $f\in L^2(|x|)$. By inequality (\ref{final}) and $\epsilon=1$, 
$$(i\alpha\cdot\nabla-m\beta- i )^{-1}f\in L^2(|x|^{-1}).$$
Since $\sup_{x\in\Rd\backslash \{0\}}|x||\V(x)|<1$, then
$$\V(i\alpha\cdot\nabla-m\beta- i )^{-1}f\in L^2(|x|).$$
Again by inequality (\ref{final}) we get that the second term of $\psi$ is also in $L^2(|x|^{-1})$. The same thing happens with the rest of the terms in the sum. Observe that the series is convergent because the constants of the inequalities we have used are less than or equal to 1.

It is evident that $\psi\in L^2(|x|^{-1})$ is equivalent to $\frac{\psi}{|x|}\in L^2(|x|)$. Then, by the assumption on $\V$, we obtain that $\V\psi\in L^2(|x|)$. We claim that
\begin{equation}\label{3bitarte}
(i\alpha\cdot\nabla-m\beta- i )\psi\in L^2(|x|).
\end{equation}
This is because 
$$(i\alpha\cdot\nabla-m\beta- i )\psi+\V\psi=f$$
and $f,\V\psi\in L^2(|x|)$. From (\ref{3bitarte}) and by inequality (\ref{ineq.2.improved}), $\psi\in L^2(|x|)$. Therefore, we have seen that $\psi\in L^2(|x|)\cap L^2(|x|^{-1})$. Thus by interpolation we obtain that $\psi\in L^2(\Rd, \C^4)$. It remains to prove that $H\psi\in  L^2(1+|x|)$. However, this is true because we can write
$$H\psi=(i\alpha\cdot\nabla-m\beta+\V)\psi=f-i \psi,$$
and we know that $f,\psi\in  L^2(1+|x|)$.

Following the same approach, it can be proved that the solution $\psi$ of $(H - i )\psi=f$ is in $\widetilde{\mathcal{D}}$.

\emph{Step 2.} We are going to see that $H$ defined on $\widetilde{\mathcal{D}}$ is symmetric. So we have to see that 
$$\forall \psi_1,\psi_2\in \widetilde{\mathcal{D}},\quad \langle H\psi_1,\psi_2\rangle=\langle \psi_1,H\psi_2\rangle.$$
Since $\psi_1, \psi_2\in \widetilde{\mathcal{D}}$ then $\psi_1,\psi_2, H\psi_1, H\psi_2\in  L^2(1+|x|)$. Let
$$f_1=(H+i)\psi_1 \quad\text{and} \quad f_2=(H-i)\psi_2.$$
Those functions are in $ L^2(1+|x|)$. Hence,
$$\langle H\psi_1,\psi_2\rangle=\langle (H+i)\psi_1,\psi_2\rangle-\langle i\psi_1, \psi_2\rangle=\langle f_1,\psi_2\rangle-\langle i\psi_1, \psi_2\rangle.$$
Now from Step 1, the solutions $\psi_1$ and $\psi_2$ of $(H+i)\psi_1=f_1$ and $(H-i)\psi_2=f_2$, respectively, can be written by means of a Neumann series. That is, 
\begin{eqnarray*}
 \psi_1&=&\sum_{j=0}^{\infty}(-1)^j[(i \alpha\cdot\nabla-m\beta- i)^{-1}\V]^j(i \alpha\cdot\nabla-m\beta- i)^{-1}f_1
\end{eqnarray*}
and
\begin{eqnarray*}
 \psi_2&=&\sum_{j=0}^{\infty}(-1)^j[(i \alpha\cdot\nabla-m\beta+ i)^{-1}\V]^j(i \alpha\cdot\nabla-m\beta+ i)^{-1}f_2.
\end{eqnarray*}
Observe that 
$$(i\alpha\cdot\nabla - m \beta \pm i)^{-1}= (\Delta +m+1)^{-1}(i\alpha\cdot\nabla - m \beta \mp i).$$
The first term on the right hand side is symmetric. In the second term, notice that $i\alpha\cdot\nabla - m \beta$ is symmetric but $\mp i$ is antisymmetric, thus the sign of this part changes when we pass it from one side to the other in the following inner product. Moreover, $\V$ is symmetric by assumption. Hence,
\begin{eqnarray*}
\langle f_1,\psi_2\rangle&=&\left\langle f_1,\sum_{j=0}^{\infty}(-1)^j[(i \alpha\cdot\nabla-m\beta+ i)^{-1}\V]^j(i \alpha\cdot\nabla-m\beta+ i)^{-1}f_2\right\rangle\\
&=&\sum_{j=0}^{\infty}(-1)^j\left\langle f_1, [(i \alpha\cdot\nabla-m\beta+ i)^{-1}\V]^j(i \alpha\cdot\nabla-m\beta+ i)^{-1}f_2\right\rangle\\
&=&\sum_{j=0}^{\infty}(-1)^j\left\langle [(i \alpha\cdot\nabla-m\beta- i)^{-1}\V]^j(i \alpha\cdot\nabla-m\beta- i)^{-1}f_1, f_2\right\rangle\\
&=&\left\langle \sum_{j=0}^{\infty}(-1)^j[(i \alpha\cdot\nabla-m\beta- i)^{-1}\V]^j(i \alpha\cdot\nabla-m\beta- i)^{-1}f_1, f_2\right\rangle\\
&=&\langle \psi_1, f_2\rangle.
\end{eqnarray*}
Therefore,
\begin{eqnarray*}
\langle H\psi_1,\psi_2\rangle&=&\langle f_1,\psi_2\rangle-\langle i\psi_1, \psi_2\rangle=\langle \psi_1,f_2\rangle-\langle i\psi_1, \psi_2\rangle\\
&=&\langle \psi_1,(H-i)\psi_2\rangle-\langle i\psi_1, \psi_2\rangle=\langle \psi_1,H\psi_2\rangle.
\end{eqnarray*}

\emph{Step 3.} Let us prove that for all $f\in L^2(1+|x|)$ we have
\begin{equation}\label{efe}
\|\psi\|_{L^2}\leq \|f\|_{L^2}.
\end{equation}
In the first step we have seen that for $f\in L^2(1+|x|)$ there exists a function such that $(H+i)\psi=f$. Multiply this equation by $\overline{\psi}$, integrate over $\Rd$ and take the imaginary parts to get
$$\Im\langle (H+i)\psi, \psi\rangle=\Im\langle f, \psi\rangle.$$
As we have proved that $H$ is symmetric, then $\langle H\psi, \psi\rangle$ is real. Hence,
$$\langle \psi, \psi\rangle=\Im\langle (H+i)\psi, \psi\rangle=\Im\langle f, \psi\rangle\leq \|f\|_{L^2}\|\psi\|_{L^2},$$
which completes the proof of this part.

\emph{Step 4.} At this point we have all the ingredients to prove (\ref{theorem}). From Step 1 we know that for every $f$ in $L^2(1+|x|)$ there exists a function $\psi$ in $\widetilde{\mathcal{D}}$ such that $ (H+ i )\psi=f$.
Since $L^2(1+|x|)$ is dense in $L^2(\Rd,\C^4)$, for each $f\in L^2(\Rd,\C^4)$ there exists a sequence $\{f_n\}_{n\in \N}$ in  $L^2(1+|x|)$ such that 
$$\lim_{n\rightarrow \infty}\|f_n-f\|_{L^2}=0.$$
Then, $f_n$ is a Cauchy sequence in $L^2(\Rd,\C^4)$. Now since $f_n\in L^2(1+|x|)$, by Step 1, $(H+ i )\psi_n=f_n$ for $\psi_n\in \widetilde{\mathcal{D}}$.
Notice that $\psi_n$ is a Cauchy sequence in $L^2(\Rd,\C^4)$ because by Step 3
 $$\|\psi_n-\psi_m\|_{L^2}\leq \|f_n-f_m\|_{L^2}, \quad n, m\in \N.$$
Thus there exists a function $\psi\in L^2(\Rd,\C^4)$ such that
 $$\lim_{n\rightarrow \infty}\|\psi_n-\psi\|_{L^2}=0.$$
Since $H\psi_n$ converges to $H\psi$ in the distributional sense and $H\psi_n=f_n-i\psi_n$ converges to $f-i\psi$ in $L^2(\Rd,\C^4)$, $H\psi=f-i\psi$.

We have that for a function $f_n\in L^2(1+|x|)$ there exists a function $\psi_n$ such that $(H+ i)\psi_n=f_n$. Thus, in particular, we have the weaker identity
$$\int_{\Rd}(H+ i)\psi_n \cdot \overline{\varphi}=\int_{\Rd}f_n \cdot \overline{\varphi},$$ 
where each of the four components of $\varphi$ is in the Schwartz class. Since we have seen that $H$ is symmetric,
$$\int_{\Rd}f_n \cdot \overline{\varphi}=\int_{\Rd}(H+ i)\psi_n \cdot \overline{\varphi}=\int_{\Rd}\psi_n \cdot \overline{(H- i)\varphi}.$$
We take the limit when $n$ tends to infinity to get
$$\int_{\Rd}f \cdot \overline{\varphi}=\lim_{n\rightarrow \infty}\int_{\Rd}f_n \cdot \overline{\varphi}=\lim_{n\rightarrow \infty}\int_{\Rd}\psi_n \cdot \overline{(H- i)\varphi}=\int_{\Rd}\psi \cdot \overline{(H- i)\varphi},$$
and we obtain the desired result.

The equation (\ref{theorem2}) is proved following the same approach. We only have to change $+i$ by $-i$.

 \emph{Step 5.} For the uniqueness of the solutions we will need the following point (i). In (i) we will prove that for all $f\in  L^2(\Rd,\C^4)$,
$$\|\psi\|_{L^2}\leq \|f\|_{L^2}.$$
Hence, if $f=0$ then $\psi=0$. Therefore, there is uniqueness in Step 4. Once we have this, since $\psi\in L^2(\Rd,\C^4)$ and by using the same argument we conclude that there exists a unique $\varphi \in L^2(\Rd,\C^4)$ such that $(H+i)\varphi=\psi$. In consequence,
$$\int_{\Rd}|\psi|^2=\int_{\Rd}\psi \cdot \overline{(H+i)\varphi}=\int_{\Rd}f \cdot \overline{\varphi}\leq \|f\|_{L^2}\|\varphi\|_{L^2}\leq\|f\|_{L^2}\|\psi\|_{L^2}.$$
Thus we conclude the uniqueness of solution of (\ref{theorem}). The same argument follows for the uniqueness of (\ref{theorem2}).

We now proceed to prove the second part of the theorem.
 
 (i) For $\psi_n$ and $f_n$ as in Step 4 we have
 $$\|\psi_n\|_{L^2}\leq\|f_n\|_{L^2}$$
 from Step 3. Passing to the limit as $n$ goes to infinity we get
  $$\|\psi\|_{L^2}\leq\|f\|_{L^2}.$$
(ii) Let us check now that 
\begin{equation*}
\int_{\Rd}\frac{|\psi|^2}{|x|}\leq c \int_{\Rd}|f|^2,
\end{equation*}
 where $c$ is a positive constant. We define a smooth cut-off function $\eta$ such that  $0\leq \eta(x)\leq 1$ and
$$
\eta(x):=
\left\{
\begin{array}{rl}
1 & \mbox{if } |x|\leq 1, \\
0 & \mbox{if } |x|> 2.
\end{array}
\right .
$$
Take
$$\widetilde{f}=(H\pm i)(\eta\psi)=(-i\alpha\cdot\nabla+m\beta-\V \pm i)(\eta\psi)=\eta(H\pm i)\psi-(i\alpha\cdot\nabla\eta)\psi.$$
First notice that $\widetilde{f}$ is in $L^2(1+|x|)$. The first term because $f=(H\pm i)\psi$ is in $L^2(\Rd,\C^4)$, and therefore, $\eta(H\pm i)\psi\in L^2(1+|x|)$. And the second term because, since $f\in L^2(\Rd,\C^4)$, its corresponding $\psi$ belongs to  $L^2(\Rd,\C^4)$. In consequence, $(i\alpha\cdot\nabla\eta)\psi\in L^2(\Rd,\C^4)$, and since it is compactly supported, $(i\alpha\cdot\nabla\eta)\psi\in L^2(1+|x|)$.
 
Since $\widetilde{f}\in L^2(1+|x|)$ and by inequality (\ref{final}) we conclude that $\eta\psi \in L^2(|x|^{-1})$. Therefore,
 \begin{eqnarray*}
 \int_{\Rd}|\psi|^2\frac{1}{|x|}&=& \int_{|x|\leq 1}|\psi|^2\frac{1}{|x|}+ \int_{|x|>1}|\psi|^2\frac{1}{|x|}\\
 &\leq& \int_{|x|\leq 1}|\eta\psi|^2\frac{1}{|x|}+ \int_{|x|>1}|\psi|^2\\
 &\leq&  \int_{\Rd}|\widetilde{f}|^2|x|+ \int_{\Rd}|\psi|^2\\
 &\leq&  c\left(\int_{\Rd}|\eta f|^2|x|+ \int_{\Rd}|x||(i\alpha\cdot\nabla \eta)\psi|^2 \right)+\int_{\Rd}|f|^2\\
 &\leq& c \int_{|x|\leq 2}|f|^2+ c \int_{|x|\leq 2}|\psi|^2+\int_{\Rd}|f|^2\\
 &\leq& c\int_{\Rd}|f|^2.
 \end{eqnarray*}
 
 (iii) We start with 
 \begin{eqnarray*}
&& \int_{|x|\leq 1}|x||i\alpha\cdot\nabla\psi|^2\\
&=& \int_{|x|\leq 1}|x||(i\alpha\cdot\nabla-m\beta+\V\pm i)\psi+(m\beta-\V\mp i)\psi|^2\\
&\leq&c \int_{|x|\leq 1}|x||(H\pm i)\psi|^2+c (m^2+1) \int_{|x|\leq 1}|x||\psi|^2+c \int_{|x|\leq 1}|x|\left|\V\psi\right|^2\\
&\leq& c \int_{\Rd}|f|^2.
 \end{eqnarray*} 
The last estimate comes from the following. First note that 
\begin{eqnarray*}
 \int_{|x|\leq 1}|x||(H\pm i)\psi|^2&=&\int_{|x|\leq 1}|x||f|^2 \leq \int_{\Rd}|f|^2.
 \end{eqnarray*} 
Secondly, from (i) we have that
$$(m^2+1) \int_{|x|\leq 1}|x||\psi|^2\leq c \int_{|x|\leq 1}|\psi|^2\leq c \int_{\Rd}|f|^2.$$
Finally,
$$\int_{|x|\leq 1}|x|\left|\V\psi\right|^2\leq\int_{|x|\leq 1}|x|\left|\frac{1}{|x|}\psi\right|^2\leq\int_{|x|\leq 1}\frac{1}{|x|}|\psi|^2\leq c\int_{\Rd}|f|^2,$$
where the last estimate holds because of (ii).

(iv) Since $f\in L^2(\Rd,\C^4)$, its corresponding $\psi$ is also in $L^2(\Rd,\C^4)$. Taking the Fourier transform it is easy to check that  the identity 
$$\int_{\Rd} (i\alpha\cdot\nabla)\psi\cdot\overline{\sqrt{-\Delta}(i\alpha\cdot\nabla)^{-1}\psi}=\int_{\Rd}\psi \cdot\overline{\sqrt{-\Delta}\psi}$$
holds when both terms are finite. Define $\phi=\sqrt{-\Delta}(i\alpha\cdot\nabla)^{-1}\psi$. Then
\begin{eqnarray*}
\left|\int_{\Rd} (i\alpha\cdot\nabla)\psi\cdot\overline{\phi}\right|&=&\left|\int_{\Rd}(i\alpha\cdot\nabla+\V)\psi\cdot\overline{\phi}-\V\psi\cdot\overline{\phi}\right|\\
&\leq& \left|\int_{\Rd} (i\alpha\cdot\nabla+\V)\psi\cdot\overline{\phi}\right|+\left|\int_{\Rd}\V\psi\cdot\overline{\phi}\right|\\
&\leq& \left|\int_{\Rd} (i\alpha\cdot\nabla+\V)\psi\cdot\overline{\phi}\right|+\int_{\Rd}\frac{1}{|x|}|\psi\cdot\overline{\phi}|\\
&\leq& \left(\int_{\Rd} |(i\alpha\cdot\nabla+\V)\psi|^2\right)^{1/2}\left(\int_{\Rd}|\phi|^2\right)^{1/2}\\
&+&\left(\int_{\Rd}|\psi|^2|x|^{-1}\right)^{1/2}\left(\int_{\Rd}|\phi|^2|x|^{-1}\right)^{1/2}.
\end{eqnarray*}  
Let us estimate the first term on the right side. The first integral can be estimated as follows
\begin{eqnarray*}
\int_{\Rd} |(i\alpha\cdot\nabla+\V)\psi|^2&\leq&c \int_{\Rd}|(H+ i)\psi|^2+c(m^2+1)\int_{\Rd}|\psi|^2\\
&\leq&c \|f\|^2_{L^2}+c  \|f\|^2_{L^2}\leq c  \|f\|^2_{L^2}.
\end{eqnarray*}  
Here we have used (i) and the fact that 
$$f=(H+ i)\psi=(i\alpha\cdot\nabla+\V-m\beta+i)\psi.$$
On the other hand,  for the second term we denote by $\mathcal{F}(g)$ the Fourier transform of a function $g$. Thus,
\begin{eqnarray*}
\int_{\Rd}|\phi|^2=\int_{\Rd}|\sqrt{-\Delta}(i\alpha\cdot\nabla)^{-1}\psi|^2=\int_{\Rd}|\mathcal{F}(\sqrt{-\Delta}(i\alpha\cdot\nabla)^{-1}\psi)|^2.
\end{eqnarray*}  
It is easy to check that 
$$\int_{\Rd}|\mathcal{F}(\sqrt{-\Delta}(i\alpha\cdot\nabla)^{-1}\psi)|^2=\int_{\Rd}|\mathcal{F}(\psi)|^2,$$
just notice that
$$\mathcal{F}((i\alpha\cdot\nabla)^{-1}g)=\frac{1}{|\xi|^2}\mathcal{F}((i\alpha\cdot\nabla) g).$$
Therefore,
$$\int_{\Rd}|\phi|^2=\int_{\Rd}|\mathcal{F}(\psi)|^2=\int_{\Rd}|\psi|^2\leq \|f\|^2_{L^2}.$$
The first integral of the second term is bounded by $c\|f\|_{L^2}$ because of (ii). And the second integral because
$$\left(\int_{\Rd}|\phi|^2|x|^{-1}\right)^{1/2}\leq c \left(\int_{\Rd}|\psi|^2|x|^{-1}\right)^{1/2}\leq c \|f\|_{L^2}.$$
Observe that the first inequality holds because $|x|^{-1}$ is an $A_2$ weight and $\sqrt{-\Delta}(i\alpha\cdot\nabla)^{-1}$ can be expressed by means of the Riesz transforms. In consequence,
$$\int_{\Rd}\psi \cdot\overline{\sqrt{-\Delta}\psi}=\int_{\Rd} i\alpha\cdot\nabla\psi\cdot\overline{\phi}\leq c \|f\|^2_{L^2}.$$
Therefore, taking into account what we have proved until now and since $\psi\in L^2(\Rd,\C^4)$ we get that
$$\|\psi\|_{H^{1/2}}\leq c \|f\|_{L^2}.$$

(v) Let $f^1, f^2\in L^2(\Rd,\C^4)$ and take sequences $f_n^1, f_n^2 \in L^2(1+|x|)$ such that
$$f^1=\lim_{n\rightarrow \infty}f_n^1 \quad \text{and} \quad f^2=\lim_{n\rightarrow \infty}f_n^2$$
in $L^2(\Rd,\C^4)$. We take
$$(H+ i)\psi_n^1=f_n^1\quad \text{and}\quad (H- i)\psi_n^2=f_n^2$$
and follow the approach of Step 2. Then,
$$\int_{\Rd}(H+ i)\psi_n^1\cdot\overline{\psi_n^2}=\int_{\Rd}\psi_n^1\cdot\overline{(H- i)\psi_n^2}.$$
We let $n$ tend to infinity and obtain the desired conclusion. Taking
$$(H- i)\widetilde{\psi}_n^1=f_n^1\quad \text{and}\quad (H+ i)\widetilde{\psi}_n^2=f_n^2$$
and following the same approach we get
$$\int_{\Rd}(H- i)\widetilde{\psi}_n^1\cdot\overline{\widetilde{\psi}_n^2}=\int_{\Rd}\widetilde{\psi}_n^1\cdot\overline{(H+ i)\widetilde{\psi}_n^2}.$$
This completes the proof of the theorem.

\section{Proof of Theorem \ref{3thm}}\label{sec:s_a}

We use the basic criterion for  self-adjointness and Theorem \ref{teorema} for the proof in this section. First of all, we will see that 
$$\text{Ran}(H\pm i)=L^2(\Rd,\C^4).$$
In Theorem \ref{teorema} we have already proved that for all $f\in L^2(\Rd,\C^4)$ there exists a function $\psi\in \mathcal{D}(H)$ such that $(H+ i)\psi=f$. Analogously, for all $f\in L^2(\Rd,\C^4)$ there exists another function $\psi\in \mathcal{D}(H)$ such that $(H- i)\psi=f$.

Let us show the symmetry of $H$. For all $\psi_1, \psi_2\in \mathcal{D}(H)$ we have $H\psi_1, H\psi_2 \in L^2(\Rd,\C^4)$, which we denote by $g_1, g_2$ respectively. Let
$$f_1=g_1+ i \psi_1\quad \text{and} \quad f_2=g_2- i\psi_2$$
which belong to $L^2(\Rd,\C^4)$. By Theorem \ref{teorema}, $\psi_1$ and $\psi_2$ are the unique functions in $L^2(\Rd,\C^4)$ such that 
$$(H+ i)\psi_1=f_1, \quad (H- i)\psi_2=f_2.$$
Now write
\begin{eqnarray*}
\langle H\psi_1, \psi_2\rangle&=&\langle (H+ i)\psi_1, \psi_2\rangle - \langle i\psi_1, \psi_2\rangle\\
&=&\langle \psi_1, (H- i)\psi_2\rangle - \langle i\psi_1, \psi_2\rangle\\
&=&\langle \psi_1, H\psi_2\rangle.
\end{eqnarray*}
Note that in the second identity we have used (v) of Theorem \ref{teorema}.

To conclude the proof we need to characterize the domain of the extension, thus we have to see that for all $\psi\in \mathcal{D}(H)$,
$$\int_{\Rd}|\psi|^2\frac{1}{|x|}<+\infty.$$
However, since $\psi\in \mathcal{D}(H)$, $\psi,H\psi\in L^2(\Rd,\C^4)$, so that
$f=(H+i)\psi$ belongs to $L^2(\Rd,\C^4)$. By (ii) of Theorem \ref{teorema} we get the desired result. It is easy to check that $\mathcal{D}(H)\subset H^{1/2}(\Rd,\C^4)$ thanks to (iv) of Theorem \ref{teorema}.

\section{Further comments}\label{sec:further}

In this section we follow the arguments of \cite{DDEV} and give an alternative proof of Theorem \ref{final_ineq}. We denote by $X_+$ (resp. $X_-$) the positive (resp. negative) spectral space of $1+\sigma\cdot L$ and by $P_{\pm}=\frac{1}{2}\left(1\pm\frac{1+\sigma\cdot L}{|1+\sigma\cdot L|}\right)$ the corresponding projectors on $L^2(\Rd, \C^2)$. We write $\phi_{\pm}:= P_{\pm}\phi$. We define $\chi_{\pm}$ in the same way and we denote $\psi_{\pm}=(\phi_{\pm},\chi_{\pm})^{\perp}$.  Let $V(|x|)$ a radial positive function and we define the following radial functions
$$h^+(r)=\frac{1}{r^2}\int_0^rV(t)t^2dt\quad \text{and}\quad h^-(r)=r^2\int_r^{\infty}V(t)\frac{dt}{t^2}.$$

\begin{thm}\label{otraprueba}
Let $V(|x|)$ a radial positive function such that 
$\|h^+\|_{\infty}\leq \frac{1}{2}$ and $\|h^-\|_{\infty}\leq \frac{1}{2}$. Then, 
\begin{equation}\label{hip}
\int_{\Rd}V|\psi|^2\leq\int_{\Rd}V^{-1}|(\alpha\cdot\nabla+im\beta\pm\epsilon\I_4)\psi|^2+ \sqrt{m^2+\epsilon^2}\left(\int_{\Rd}|(h^--h^+)\psi|^2\right)^{1/2}\left(\int_{\Rd}|\psi|^2\right)^{1/2}.
\end{equation}
In particular, if $V=\frac{1}{|x|}$ then $h^+=h^-=\frac{1}{2}$ and we recover Theorem \ref{final_ineq}.
\end{thm}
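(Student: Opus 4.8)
The plan is to follow \cite{DDEV} and reduce \eqref{hip} to a one--parameter family of one--dimensional weighted inequalities by means of the angular decomposition set up above, the mass being handled by one extra completion of squares. Write $\psi=(\phi,\chi)^{\perp}$ with $\phi,\chi\colon\Rd\to\C^{2}$, and $\phi=\phi_{+}+\phi_{-}$, $\chi=\chi_{+}+\chi_{-}$ with $\phi_{\pm}=P_{\pm}\phi$, similarly for $\chi$. Since $V$ is radial it commutes with $P_{\pm}$; since $\sigma\cdot\frac{x}{|x|}$ is a unitary involution interchanging $X_{+}$ and $X_{-}$, and $\sigma\cdot\nabla=(\sigma\cdot\frac{x}{|x|})(\partial_{r}-\frac1r\,\sigma\cdot L)$ sends $X_{\pm}$--functions into $X_{\mp}$--functions, the quadratic form $\int_{\Rd}V^{-1}|(\alpha\cdot\nabla+im\beta\pm\epsilon\I_{4})\psi|^{2}$ decomposes orthogonally into a contribution of the pair $(\phi_{+},\chi_{-})$ and one of $(\phi_{-},\chi_{+})$, and on each channel (eigenvalue $\kappa\in\Z\setminus\{0\}$ of $1+\sigma\cdot L$) the operator $\sigma\cdot\nabla$ becomes the scalar radial operator $u\mapsto u'+\frac{1-\kappa}{r}u$ on $L^{2}((0,\infty),r^{2}\,dr)$, coupled through $\mu:=\pm\epsilon+im$ to its partner on channel $-\kappa$.

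Next I would expand the square and integrate by parts as in the proof of Theorem \ref{final_ineq}. Because $i\sigma\cdot\nabla$ is symmetric, the mixed terms $2\Re\int V^{-1}(\sigma\cdot\nabla\chi)\cdot\overline{\mu\phi}+2\Re\int V^{-1}(\sigma\cdot\nabla\phi)\cdot\overline{\bar\mu\chi}$ lose, by cancellation, the parts still carrying a $\sigma\cdot\nabla$, leaving the boundary--type term $-2\Re\big[\bar\mu\int\partial_{r}(V^{-1})\,\chi\cdot\overline{(\sigma\cdot\tfrac{x}{|x|})\phi}\big]$, while the remaining bulk is $\int V^{-1}(|\sigma\cdot\nabla\phi|^{2}+|\sigma\cdot\nabla\chi|^{2})+(m^{2}+\epsilon^{2})\int V^{-1}(|\phi|^{2}+|\chi|^{2})$. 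For the first part of the bulk I would prove, channel by channel, the weighted Hardy inequality
\[
\int_{0}^{\infty}V\,|u|^{2}\,r^{2}\,dr\ \le\ \int_{0}^{\infty}V^{-1}\Big|\,u'+\tfrac{1-\kappa}{r}\,u\,\Big|^{2}\,r^{2}\,dr .
\]
After the substitution $u=r^{\kappa-1}v$ this reads $\int_{0}^{\infty}(Vr^{2\kappa})|v|^{2}\,dr\le\int_{0}^{\infty}(V^{-1}r^{2\kappa})|v'|^{2}\,dr$, a one--dimensional Hardy inequality which, by the Muckenhoupt criterion, holds with constant $\le1$ once $\sup_{r>0}\big(\int_{0}^{r}Vt^{2\kappa}\,dt\big)\big(\int_{r}^{\infty}Vt^{-2\kappa}\,dt\big)\le\tfrac14$; estimating $t^{2\kappa}\le r^{2\kappa-2}t^{2}$ for $t\le r$ and $t^{-2\kappa}\le r^{2-2\kappa}t^{-2}$ for $t\ge r$, the left side is $\le\sup_{r}h^{+}(r)h^{-}(r)\le\|h^{+}\|_{\infty}\|h^{-}\|_{\infty}\le\tfrac14$, so the hypotheses $\|h^{\pm}\|_{\infty}\le\tfrac12$ are precisely what is needed, the higher channels only helping. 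Summing over channels gives $\int_{\Rd}V|\phi|^{2}\le\int_{\Rd}V^{-1}|\sigma\cdot\nabla\phi|^{2}$, and likewise for $\chi$; in particular, if $m=\epsilon=0$ this already yields \eqref{hip}, with no remainder (consistently with $\sqrt{m^{2}+\epsilon^{2}}=0$).

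When $m,\epsilon$ are not both zero, the positive terms $(m^{2}+\epsilon^{2})\int V^{-1}(|\phi|^{2}+|\chi|^{2})$ must be spent to absorb the boundary cross term, and this is where the radial reference $\Phi=r^{-1}e^{-\sqrt{m^{2}+\epsilon^{2}}\,r}$ of Remark \ref{ohar2} enters: one completes one further square, designed so that the Dirac spinor built from $\Phi$ is annihilated --- which also gives sharpness --- and in doing so the $X_{+}$-- and $X_{-}$--components pick up the averaged potentials $h^{+}$ and $h^{-}$ respectively. Collecting the leftover contributions, the residue is governed by $\big|\Re\int(h^{-}-h^{+})\,\psi\cdot\overline{(\sigma\cdot\tfrac{x}{|x|})\psi}\,\big|$, and a final Cauchy--Schwarz bounds this by $\sqrt{m^{2}+\epsilon^{2}}\big(\int|(h^{-}-h^{+})\psi|^{2}\big)^{1/2}\big(\int|\psi|^{2}\big)^{1/2}$, the stated remainder. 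For $V=|x|^{-1}$ one has $h^{+}\equiv h^{-}\equiv\tfrac12$: the quantity above equals $\tfrac14$, the residue vanishes, and the argument collapses onto Theorem \ref{final_ineq} --- indeed the $\kappa=1$ instance is exactly the combination of Lemma \ref{Lemma2} with Corollary \ref{inq.for_fi_nabla}.

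The step I expect to be the main obstacle is this last piece of bookkeeping: verifying that the angular asymmetry between $h^{+}$ and $h^{-}$ reassembles into precisely the stated remainder (with the sharp constant $\sqrt{m^{2}+\epsilon^{2}}$ in front), and not into something larger, and that no positivity is wasted in passing from the channel--by--channel estimates back to the operator on $\Rd$. A routine but necessary accompanying point is to justify the integrations by parts: the extremal radial profiles are singular at the origin, so the boundary terms at $r=0$ and $r=\infty$ must be disposed of by a cut--off and limiting argument in the spirit of Remark \ref{oharra}.
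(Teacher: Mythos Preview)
Your route departs from the paper's. The paper never expands the quadratic form $\int V^{-1}|(\alpha\cdot\nabla+im\beta\pm\epsilon)\psi|^2$; it studies instead the bilinear quantity
\[
2\Re\int_{\Rd}(\alpha\cdot\nabla+im\beta\pm\epsilon\I_4)\psi\cdot\overline{\Big(\alpha\cdot\tfrac{x}{|x|}\Big)\big(-h^+\psi_++h^-\psi_-\big)},
\]
bounding it from above by Cauchy--Schwarz (this is the only place $\|h^\pm\|_\infty\le\tfrac12$ enters) and from below via the commutator identity $[\sigma\cdot\nabla,(\sigma\cdot\tfrac{x}{|x|})h^\pm]=2(1+\sigma\cdot L)\tfrac{h^\pm}{r}+\tfrac{h^\pm}{r}+r(\tfrac{h^\pm}{r})'$ together with the defining ODEs $\tfrac{2}{r}h^++(h^+)'=V=\tfrac{2}{r}h^--(h^-)'$. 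The lower bound splits into a diagonal piece that gives exactly $\int V|\psi|^2$ and an off-diagonal piece which, after the $\alpha\cdot\nabla$ contributions cancel by orthogonality of $X_+$ and $X_-$, is precisely the stated $(h^--h^+)$ remainder. No Muckenhoupt test and no separate treatment of the mass term appear.

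Your Muckenhoupt argument for $\int V|\phi|^2\le\int V^{-1}|\sigma\cdot\nabla\phi|^2$ is fine and does give the case $m=\epsilon=0$. But the step you yourself flag as the obstacle is a real gap, not bookkeeping: the decoupling ``Hardy first, then absorb the cross term with $(m^2+\epsilon^2)\int V^{-1}|\psi|^2$'' fails. For $V=1/|x|$ (so $h^+=h^-$ and the target remainder is zero) what is left after your sharp Hardy step is the claim
\[
|\mu|^2\int_{\Rd}|x|\,|\psi|^2\ \ge\ 2\Re\Big(\bar\mu\int_{\Rd}\chi\cdot\overline{(\sigma\cdot\tfrac{x}{|x|})\phi}\Big),\qquad \mu=im\pm\epsilon,
\]
and this is false: take $\chi=\tfrac{\mu}{|\mu|}(\sigma\cdot\tfrac{x}{|x|})\phi$ with $\phi$ radial and concentrated near the origin; then the right side equals $2|\mu|\int|\phi|^2$ while the left side equals $2|\mu|^2\int|x|\,|\phi|^2$, which can be made arbitrarily small. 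The positivity you spent on the sharp Muckenhoupt inequality is exactly what is needed to control this cross term, and it cannot be used twice. Any ``further square'' that repairs this must couple the gradient and zero-order terms from the outset (as Lemma~\ref{inq.for_fi} does when $V=1/|x|$), and for general $V$ that coupled object is precisely what the paper's multiplier $-h^+\psi_++h^-\psi_-$ manufactures through the ODEs for $h^\pm$. The missing ingredient is the multiplier itself, not the final reassembly.
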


\begin{proof}
As in Theorem \ref{final_ineq} here also we can assume that $\epsilon=1$ without loss of generality. We will write the proof just for the plus sign of $\I_4$. The corresponding inequality for the minus sign is obtained following the same argument. The desired inequality is obtained by estimating from below and from above the following expression
\begin{equation}\label{real}
2 \Re \int_{\Rd}(\alpha\cdot\nabla+im\beta+\I_4)\psi \cdot\overline{\left(\alpha\cdot\frac{x}{|x|}\right)(-h^+ \psi_++h^-\psi_-)}.
\end{equation}
Let us first estimate it from below. We write the real part as a sum of the integral and its conjugate and we develop it obtaining the following
\begin{align*}
&- \int_{\Rd}(\alpha\cdot\nabla+im\beta+\I_4)\psi_+ \cdot\overline{\left(\alpha\cdot\frac{x}{|x|}\right)h^+ \psi_+}-\int_{\Rd}\left(\alpha\cdot\frac{x}{|x|}\right)h^+ \psi_+ \cdot\overline{(\alpha\cdot\nabla+im\beta+\I_4)\psi_+}\\
&+\int_{\Rd}(\alpha\cdot\nabla+im\beta+\I_4)\psi_- \cdot\overline{\left(\alpha\cdot\frac{x}{|x|}\right)h^- \psi_-}+\int_{\Rd}\left(\alpha\cdot\frac{x}{|x|}\right)h^- \psi_- \cdot\overline{(\alpha\cdot\nabla+im\beta+\I_4)\psi_-}\\
&+\int_{\Rd}(\alpha\cdot\nabla+im\beta+\I_4)\psi_+ \cdot\overline{\left(\alpha\cdot\frac{x}{|x|}\right)h^- \psi_-}+\int_{\Rd}\left(\alpha\cdot\frac{x}{|x|}\right)h^- \psi_- \cdot\overline{(\alpha\cdot\nabla+im\beta+\I_4)\psi_+}\\
&-\int_{\Rd}(\alpha\cdot\nabla+im\beta+\I_4)\psi_- \cdot\overline{\left(\alpha\cdot\frac{x}{|x|}\right)h^+ \psi_+}-\int_{\Rd}\left(\alpha\cdot\frac{x}{|x|}\right)h^+ \psi_+ \cdot\overline{(\alpha\cdot\nabla+im\beta+\I_4)\psi_-}.
\end{align*}
We denote the first fourth terms of the last equation by $I$ and the last fourth by $II$. First we will study $II$. 
\begin{align*}
II=&\int_{\Rd}\alpha\cdot\nabla\psi_+ \cdot\overline{\left(\alpha\cdot\frac{x}{|x|}\right)h^- \psi_-}
+\int_{\Rd}\left(\alpha\cdot\frac{x}{|x|}\right)h^- \psi_- \cdot\overline{\alpha\cdot\nabla\psi_+}\\
&-\int_{\Rd}\alpha\cdot\nabla\psi_- \cdot\overline{\left(\alpha\cdot\frac{x}{|x|}\right)h^+ \psi_+}-\int_{\Rd}\left(\alpha\cdot\frac{x}{|x|}\right)h^+ \psi_+ \cdot\overline{\alpha\cdot\nabla\psi_-}\\
&+\int_{\Rd}(im\beta+\I_4)\psi_+ \cdot\overline{\left(\alpha\cdot\frac{x}{|x|}\right)h^- \psi_-}+\int_{\Rd}\left(\alpha\cdot\frac{x}{|x|}\right)h^- \psi_- \cdot\overline{(im\beta+\I_4)\psi_+}\\
&-\int_{\Rd}(im\beta+\I_4)\psi_- \cdot\overline{\left(\alpha\cdot\frac{x}{|x|}\right)h^+ \psi_+}-\int_{\Rd}\left(\alpha\cdot\frac{x}{|x|}\right)h^+ \psi_+ \cdot\overline{(im\beta+\I_4)\psi_-}.
\end{align*}
The first four terms vanish because $\alpha\cdot\nabla$ anti-commutes with the positive projector, hence we obtain the negative projection of another term. Analogously for the negative projector. The same thing happens with $\alpha\cdot\frac{x}{|x|}$ because we can write this in terms of $\alpha\cdot\nabla$ as follows
$$\alpha\cdot\frac{x}{|x|}=e^{|x|}\alpha\cdot\nabla e^{-|x|}-\alpha\cdot\nabla.$$
Hence, we will have the inner product of a negative projection of a term with a positive  projection of another term, which is zero because of their orthogonality. Therefore,
\begin{eqnarray*}
II&=&\int_{\Rd}im\beta\psi_+ \cdot\overline{\left(\alpha\cdot\frac{x}{|x|}\right)h^- \psi_-}+\int_{\Rd}\left(\alpha\cdot\frac{x}{|x|}\right)h^- \psi_- \cdot\overline{im\beta\psi_+}\\
&-&\int_{\Rd}im\beta\psi_- \cdot\overline{\left(\alpha\cdot\frac{x}{|x|}\right)h^+ \psi_+}-\int_{\Rd}\left(\alpha\cdot\frac{x}{|x|}\right)h^+ \psi_+ \cdot\overline{im\beta\psi_-}\\
&+&\int_{\Rd}\psi_+ \cdot\overline{\left(\alpha\cdot\frac{x}{|x|}\right)h^- \psi_-}+\int_{\Rd}\left(\alpha\cdot\frac{x}{|x|}\right)h^- \psi_- \cdot\overline{\psi_+}\\
&-&\int_{\Rd}\psi_- \cdot\overline{\left(\alpha\cdot\frac{x}{|x|}\right)h^+ \psi_+}-\int_{\Rd}\left(\alpha\cdot\frac{x}{|x|}\right)h^+ \psi_+ \cdot\overline{\psi_-}.
\end{eqnarray*}
Now taking into account that $\alpha\cdot\frac{x}{|x|}$ is symmetric, that $\alpha_k\beta=-\beta\alpha_k$ and by using the Cauchy-Schwarz inequality we get
\begin{eqnarray*}
II&=&\int_{\Rd}\left(\alpha\cdot\frac{x}{|x|}\right)im\beta\psi_+ \cdot\overline{h^- \psi_-}-\int_{\Rd}im\beta\left(\alpha\cdot\frac{x}{|x|}\right) \psi_- \cdot\overline{h^-\psi_+}\\
&-&\int_{\Rd}\left(\alpha\cdot\frac{x}{|x|}\right)im\beta\psi_- \cdot\overline{h^+ \psi_+}+\int_{\Rd}im\beta\left(\alpha\cdot\frac{x}{|x|}\right) \psi_+ \cdot\overline{h^+\psi_-}\\
&+&\int_{\Rd}\psi_+ \cdot\overline{\left(\alpha\cdot\frac{x}{|x|}\right)h^- \psi_-}+\int_{\Rd}\psi_- \cdot\overline{\left(\alpha\cdot\frac{x}{|x|}\right)h^- \psi_+}\\
&-&\int_{\Rd}\psi_- \cdot\overline{\left(\alpha\cdot\frac{x}{|x|}\right)h^+ \psi_+}-\int_{\Rd} \psi_+ \cdot\overline{\left(\alpha\cdot\frac{x}{|x|}\right)h^+\psi_-}\\
&=&2\Re \int_{\Rd}(im\beta+\I_4)\psi_+\cdot\overline{\left(\alpha\cdot\frac{x}{|x|}\right)(h^--h^+) \psi_-}\\
&\leq&\sqrt{m^2+\epsilon^2}\left(\int_{\Rd}|(h^--h^+)\psi|^2\right)^{1/2}\left(\int_{\Rd}|\psi|^2\right)^{1/2}.
\end{eqnarray*}

Let us estimate $I$ from below. Recall that $I$ was the following expression
\begin{align*}
&-  \int_{\Rd}(\alpha\cdot\nabla+im\beta+\I_4)\psi_+ \cdot\overline{\left(\alpha\cdot\frac{x}{|x|}\right)h^+ \psi_+}-\int_{\Rd}\left(\alpha\cdot\frac{x}{|x|}\right)h^+ \psi_+ \cdot\overline{(\alpha\cdot\nabla+im\beta+\I_4)\psi_+}\\
&+\int_{\Rd}(\alpha\cdot\nabla+im\beta+\I_4)\psi_- \cdot\overline{\left(\alpha\cdot\frac{x}{|x|}\right)h^- \psi_-}+\int_{\Rd}\left(\alpha\cdot\frac{x}{|x|}\right)h^- \psi_- \cdot\overline{(\alpha\cdot\nabla+im\beta+\I_4)\psi_-}.
\end{align*}
Observe that by using the previous argument the integrals related to $im\beta+\I_4$ vanish. This is because we commute the projectors $P_{\pm}$ with $\alpha\cdot\frac{x}{|x|}$ and we obtain the inner product of two orthogonal terms. Note that $\beta$ does not change the spectral space. In consequence,
\begin{eqnarray*}
I&=&-  \int_{\Rd}\alpha\cdot\nabla\psi_+ \cdot\overline{\left(\alpha\cdot\frac{x}{|x|}\right)h^+ \psi_+}-\int_{\Rd}\left(\alpha\cdot\frac{x}{|x|}\right)h^+ \psi_+ \cdot\overline{\alpha\cdot\nabla\psi_+}\\
&+&\int_{\Rd}\alpha\cdot\nabla\psi_- \cdot\overline{\left(\alpha\cdot\frac{x}{|x|}\right)h^- \psi_-}+\int_{\Rd}\left(\alpha\cdot\frac{x}{|x|}\right)h^- \psi_- \cdot\overline{\alpha\cdot\nabla\psi_-}.
\end{eqnarray*}
We write all the integrals in terms of $\sigma$ and  use
$$[\sigma\cdot\nabla,\sigma\cdot\frac{x}{|x|} h^+]=2(1+\sigma\cdot L)\frac{h^+}{|x|}+\frac{h^+}{|x|}+|x|\left(\frac{h^+}{|x|}\right)'$$
where the derivative is with respect to the radius $|x|$, see \cite{DELV} for the details of this identity. As a consequence, we get
\begin{align*}
I=&\int_{\Rd}[\sigma\cdot\nabla,\sigma\cdot\frac{x}{|x|} h^+] \phi_+\cdot \overline{\phi_+}+\int_{\Rd}[\sigma\cdot\nabla,\sigma\cdot\frac{x}{|x|} h^+] \chi_+\cdot \overline{\chi_+}\\
&-\int_{\Rd} [\sigma\cdot\nabla,\sigma\cdot\frac{x}{|x|} h^-] \phi_-\cdot \overline{\phi_-}-\int_{\Rd} [\sigma\cdot\nabla,\sigma\cdot\frac{x}{|x|} h^-] \chi_-\cdot \overline{\chi_-}\\
=&\int_{\Rd}[2(1+\sigma\cdot L)\frac{h^+}{|x|}+(h^+)']\phi_+\cdot \overline{\phi_+}+\int_{\Rd}[2(1+\sigma\cdot L)\frac{h^+}{|x|}+(h^+)']\chi_+\cdot \overline{\chi_+}\\
&-\int_{\Rd}[2(1+\sigma\cdot L)\frac{h^-}{|x|}+(h^-)']\phi_-\cdot \overline{\phi_-}-\int_{\Rd}[2(1+\sigma\cdot L)\frac{h^-}{|x|}+(h^-)']\chi_-\cdot \overline{\chi_-}\\
\geq& \int_{\Rd}\left(\frac{2}{|x|}h^++(h^+)'\right)\psi_+\cdot\overline{\psi_+}+\int_{\Rd}\left(\frac{2}{|x|}h^--(h^-)'\right)\psi_-\cdot\overline{\psi_-}.
\end{align*}
Now,  by definition of $h^+(r)$ and  $h^-(r)$ it is immediate that 
$$\frac{2}{|x|}h^++(h^+)'=V(r)=\frac{2}{|x|}h^--(h^-)'.$$
Therefore,
\begin{equation*}
I\geq \int_{\Rd}V|\psi_+|^2+ \int_{\Rd}V|\psi_-|^2= \int_{\Rd}V|\psi|^2.
\end{equation*}
Let us estimate (\ref{real}) from above. We multiply and divide by $V(|x|)$ inside the integral. We bound the real part by the modulus and apply the Cauchy-Schwarz inequality. Hence, we obtain 
\begin{eqnarray*}
&&2 \Re \int_{\Rd}(\alpha\cdot\nabla+im\beta+\I_4)\psi \cdot\overline{\left(\alpha\cdot\frac{x}{|x|}\right)(-h^+ \psi_++h^-\psi_-)}\\
&\quad& \leq2\left( \int_{\Rd}V^{-1}|(\alpha\cdot\nabla+im\beta+\I_4)\psi|^2\right)^{1/2}\left( \int_{\Rd}V|-h^+\psi_++h^-\psi_-|^2\right)^{1/2}\\
&\quad& \leq2\max\{\|h^+\|_{\infty},\|h^-\|_{\infty}\}\left( \int_{\Rd}V^{-1}|(\alpha\cdot\nabla+im\beta+\I_4)\psi|^2\right)^{1/2}\left( \int_{\Rd}V|\psi|^2\right)^{1/2}.
\end{eqnarray*}
Now taking into account upper and lower bounds of the real part we have
\begin{equation*}
\int_{\Rd}V(|x|)|\psi|^2\leq4\max\{\|h^+\|_{\infty}^2,\|h^-\|_{\infty}^2\}\int_{\Rd}V^{-1}(|x|)|(\alpha\cdot\nabla+im\beta+\I_4)\psi|^2.
\end{equation*}
Since $\|h^+\|_{\infty}\leq1/2$ and $\|h^-\|_{\infty}\leq1/2$ we conclude the proof.
\end{proof}

\begin{remark}
If $V=\frac{1}{|x|}$ then for $\psi_0^{\epsilon,m}$ where $\phi_0^{\epsilon,m}=C r^{-1}e^{-\sqrt{\epsilon^2+m^2} r}$, $C\in\C^2$ and $\chi_0^{\epsilon,m}=\frac{\pm\epsilon+im}{\sqrt{\epsilon^2+m^2}}\left(\sigma\cdot\frac{x}{|x|}\right)\phi_0^{\epsilon,m}$ all the inequalities in the above argument become an equalities.
\end{remark}

\section*{Acknowledgements}
The authors would like to thank M. J. Esteban and N. Visciglia for many enlightening conversations.

\end{document}